\newcommand{\E}{\mathrm{E}}
\newcommand{\VAR}{\mathrm{VAR}}
\newcommand{\RMSE}{\mathrm{RMSE}}
\DeclareMathOperator*{\argmin}{arg\,min}
\newcommand{\eqdef}{ \stackrel{\textrm{\tiny def}}{=}}
\renewcommand{\natural}{{\mathbb{N}}}
\newcommand{\integernonnegative}{{\mathbb{Z}_{\geq0}}}
\newcommand{\naturalzero}{\mathbb{N}_0}
\renewcommand{\naturalzero}{\integernonnegative}
\newcommand{\real}{{\mathbb{R}}}
\newcommand{\union}{\cup}
\newcommand{\until}[1]{\{1,\dots,#1\}}
\newcommand{\nmax}{n_{\text{max}}}
\newcommand{\bML}{\hat{b}^{\text{ML}}}
\newcommand{\bhom}{\hat{b}^{\text{hom}}}
\newcommand{\lambdaEB}{\hat{\lambda}^{\text{EB}}}
\newcommand{\lambdaAH}{\hat{\lambda}^{\text{ad-hoc}}}
\newcommand{\lambdaDEC}{\hat{\lambda}^{\text{dec}}}
\newcommand{\EBestimator}{Empirical Bayes distributed estimator}
\newcommand{\AHestimator}{ad-hoc distributed estimator}
\newcommand{\sigmavec}{\bm{\sigma}}
\newcommand{\nvec}{\bm{n}}
\newcommand{\lambdavec}{\bm{\lambda}}
\newtheorem{theorem}{Theorem}[section]
\newtheorem{proposition}[theorem]{Proposition}
\newtheorem{lemma}[theorem]{Lemma}
\newtheorem{remark}[theorem]{Remark}
\newtheorem{assumption}[theorem]{Assumption}
\newcommand\oprocendsymbol{\hbox{$\square$}}
\newcommand\oprocend{\relax\ifmmode\else\unskip\hfill\fi\oprocendsymbol}
\begin{document}

\title{A Bayesian framework for distributed estimation\\ of arrival rates in asynchronous networks}

\author{Angelo Coluccia, {\it member, IEEE} and Giuseppe Notarstefano, {\it member, IEEE} \thanks{Copyright (c) 2015
    IEEE. Personal use of this material is permitted. However, permission to use
    this material for any other purposes must be obtained from the IEEE by
    sending a request to pubs-permissions@ieee.org. Angelo Coluccia and
    Giuseppe Notarstefano are with the Department of Engineering, Universit\`a
    del Salento, via Monteroni, 73100, Lecce, Italy,
    \texttt{\{name.lastname\}@unisalento.it.}  } \thanks{A preliminary short
    version of this paper has appeared as \cite{coluccia2014distributed}. This
    result is part of a project that has received funding from the European
    Research Council (ERC) under the European Union’s Horizon 2020 research and
    innovation programme (grant agreement No 638992 - OPT4SMART).}}

\maketitle 


\begin{abstract}
  In this paper we consider a network of agents monitoring a spatially
  distributed arrival process. Each node measures the number of arrivals seen at
  its monitoring point in a given time-interval with the objective of estimating
  the unknown local arrival rate. 
  We propose an asynchronous distributed approach based on a Bayesian model with
  unknown hyperparameter, where each node computes the minimum mean square error
  (MMSE) estimator of its local arrival rate in a distributed way. As a result,
  the estimation at each node ``optimally'' fuses the information from the whole
  network through a distributed optimization algorithm. Moreover, we propose an
  \emph{ad-hoc} distributed estimator, based on a consensus algorithm for
  time-varying and directed graphs, which exhibits reduced complexity and
  exponential convergence.
  We analyze the performance of the proposed distributed estimators, showing
  that they: (i) are reliable even in presence of limited local data, and (ii)
  improve the estimation accuracy compared to the purely decentralized setup.
  Finally, we provide a statistical characterization of the proposed estimators.
  In particular, for the ad-hoc estimator, we show that as the number of nodes
  goes to infinity its mean square error converges to the optimal one.
  Numerical Monte Carlo simulations confirm the theoretical characterization and
  highlight the appealing performances of the estimators.
\end{abstract}

\begin{keywords}
  distributed estimation, Empirical Bayes, push-sum
  consensus, cyber-physical systems. 
\end{keywords}

\section{Introduction}
\label{introduction}

Arrival processes provide a useful description for events occurring with some
probability in a given time or space interval. Applications range from
communications and transports to medicine (e.g., diagnostic imaging) and
astronomy (e.g., particle detection), \cite{snyder1991random}.
From a statistical point of view the prominent model for arrival processes is
notoriously Poisson. Indeed, even when independence between arrivals cannot be
assumed, the superposition of a large number of non-Poisson processes is
approximately distributed as a Poisson (Palm-Khintchine Theorem),
\cite{heyman2003stochastic}. Also, the limit distribution of counting processes
described by the Binomial distribution is a Poisson, according to the law of
rare events.

In modern network contexts (as, e.g., data, communication and sensor networks or
Intelligent Transportation Systems) estimating the process intensity at
different locations, i.e., the arrival rates at different nodes, is an important
preliminary problem to be solved in order to gain context awareness. 
Clearly, each arrival rate can be estimated by performing a
\emph{decentralized} Maximum Likelihood (ML) estimation based only on the local
arrivals at the given node. In this paper we want to investigate how the
estimation can be improved by exploiting the cooperation among the nodes.
Recently, a great interest has been devoted to cooperation schemes in which
estimation is performed by a network of computing nodes in a distributed way,
rather than by collecting all the data in a central unit,
\cite{barbarossa2013distributed,schizas2008consensus,cattivelli2010diffusion}.
How to use the information from other nodes in the network and how to design a
distributed algorithm merging such information will be the focus of the paper.

Distributed estimation has received a widespread attention in the distributed
computation literature, especially as a natural application of linear (average)
consensus algorithms, see, e.g. \cite{barbarossa2007bio}, or the recent surveys
\cite{garin2011survey,frasca2015distributed}.
Nodes typically 
interact iteratively with their neighbors by means of a ``diffusion-like''
process in which the estimation is improved by suitably combining the
estimations from neighboring nodes,
\cite{cattivelli2010diffusion,sardellitti2010fast,marelli2015distributed}.
An incremental and a diffusive distributed algorithm with finite time
convergence are proposed in \cite{pasqualetti2012distributed} for (static) state
estimation.
Distributed optimization is also strictly related to distributed estimation. In
\cite{schizas2008consensus} a distributed Alternating Direction Method of
Multipliers (ADMM) has been introduced as a tool for distributed ML estimation
of vector parameters in a wireless sensor network.
Notice that ADMM and other distributed optimization algorithms, as, e.g.,
\cite{nedic2013distributed}, can be used as building blocks to solve parts of an
estimation problem in a distributed set-up.
ML approaches for distributed estimation of a commonly observed parameter have
been proposed also in \cite{barbarossa2007decentralized}. %
In \cite{chiuso2011gossip,fagnani2011input} consensus-based algorithms have been
developed to simultaneously estimate a common parameter measured by noisy
sensors and classify sensor types.
In \cite{varagnolo2010distributed} consensus-based algorithms have been
developed to estimate global parameters in a linear Bayesian framework.
In \cite{dilorenzo2014distributed} a distributed algorithm is proposed for adaptive
Bayesian estimation of a common parameter with known prior. 
Dynamic methods have been also proposed in which the nodes keep collecting new
measurements while interacting with each other.
In \cite{cattivelli2008diffusion} a diffusion-based Recursive Least Squares (RLS)
algorithm is proposed to estimate a constant parameter, but with dynamically
acquired measurements. 
Finally, in \cite{mateos2009distributed} and \cite{schizas2008consensus2}
distributed ADMM-based algorithms are proposed for the estimation of random
signals and dynamical processes.   

Differently from the above references, in our work we consider a more general
Bayesian framework that allows the nodes to improve their local estimate, rather
than reaching a consensus on a common parameter. 
As detailed later, we will consider a special model for continuous mixtures of
Poisson variables. Poisson mixtures have been widely used in the arrival-rate
estimation literature to model non-homogeneous scenarios, see, e.g.,
\cite{freedman1962poisson,massey1996estimating} as early references. The survey
\cite{karlis2005mixed} provides an extensive review of properties and
applications for Poisson mixtures. In a Bayesian setting the classical mixing
(prior) distribution is the Gamma \cite{karlis2005mixed,withers2011compound},
which results in a closed-form posterior \cite{casella}.

The main contribution of the paper is twofold. First, we propose a distributed
estimation scheme for arrival rates in an asynchronous network, based on a
hierarchical probabilistic framework.
Specifically, we develop an Empirical Bayes approach, in which the arrival rates
are treated as random variables, whose prior distribution is parametrized by an
unknown hyperparameter to be determined via ML estimation.
In particular, we borrow from the centralized statistics literature the
  classical Gamma-Poisson model (for Poisson mixtures), assuming however the
  hyperparameter is unknown, and extend it to a scenario with non-homogeneous
  sample sizes. We show that the ML estimation of the hyperparameter is a
  separable optimization problem, that can be solved in a distributed way over
  the network by using a distributed optimization algorithm.
Thanks to this modeling idea, the local estimates are
obtained by taking advantage of the whole network data, thus improving the
accuracy especially when the amount of local data is scarce.  
With this approach we are able to capture the fact that arrival rates are not
the outcomes of isolated phenomena, but rather the expression of global
properties of the process. 
For this ``optimal'' estimator we characterize mean and variance at steady-state
(i.e., once  agents have reached consensus on the optimal solution) for
networks with a large number of agents.

Second, we propose an alternative ad-hoc distributed estimator that, although
suboptimal, performs comparably to the optimal one. The main advantage of this
ad-hoc estimator is that the resulting algorithm has a simple update rule based
on linear consensus protocols, thus exhibiting the same appealing exponential
convergence. 
For the ad-hoc estimator we also characterize transient (at any time-instant)
mean and variance for a given number of agents.
Notably, we show that at steady-state and for large number of agents the ad-hoc
estimator attains the performance of the optimal one.

To strengthen these two contributions we perform a Monte Carlo analysis and
compare the theoretical expressions obtained for mean and variance with their
sample counterparts. The numerical computations confirm the theoretical analysis
and show that some key assumptions made for a rigorous, but tractable analysis
are not restrictive. Moreover, they highlight other interesting features of the
proposed distributed estimators. For example, the ad-hoc estimator achieves
performances close to the optimal one even for a limited number of nodes.

The paper is organized as follows. In Section~\ref{sec:prob_form} we introduce
the model of a monitoring network and set up the estimation problem. In
Section~\ref{sec:estimators} we develop the proposed distributed estimators
based on the Empirical Bayes approach. The statistical performances of the two
estimators are analyzed in Section~\ref{sec:theoretical_analysis}, while in
Section~\ref{sec:numerical_analysis} we perform a Monte Carlo analysis to
confirm the theoretical results.

\section{Monitoring network and arrival rate estimation problem}
\label{sec:prob_form}
We consider a \emph{network of monitors} having \emph{sensing},
\emph{communication} and \emph{computation} capabilities. That is, each monitor
can measure the number of arrivals in a given measurement time-scale
(e.g., 1 second or 1 minute), share local data with neighboring agents,
and perform local computations on its own and its neighbors' data. The objective
is to fuse the data in order to improve the estimation of the arrival rates.

Formally, each node collects measurements \emph{asynchronously} in an
\emph{observation window}, over which the underlying process can be assumed to
be stationary, i.e., the set of rates can be considered constant over the
  observation window. Clearly, the arrival rates have stationary increments, so
  that the number of arrivals in disjoint intervals are statistically
  independent.
A measurement consists of the number of arrivals detected at a given location in
the (common) \emph{time-scale interval}.
Accordingly, for each monitor $i\in\{1, \ldots, N\}$ we introduce the following
variables:
\begin{itemize}
\item $\lambda_i$ unknown arrival rate;
\item $y_{i,\ell}$ the $\ell$-th collected measurement (number of arrivals per
  time-scale interval);
\item $n_i \in [1,\nmax]$ number of measurements $y_{i,\ell}$ collected in the
  observation window (where $\nmax$ is the maximum number of measurements that
  can be collected).
\end{itemize}
The conditional distribution of $y_{i,\ell}$  given $\lambda_i$
is a Poisson random variable with parameter $\lambda_i$, i.e.,
$y_{i,\ell}| \lambda_i \sim \text{Poisson} (\lambda_i)$. All measurements are
assumed to be independent. 

We denote by $n \eqdef \sum_{i=1}^N n_i$ the total number of measurements. If all
the nodes have the same number of measurements, i.e., $n_i = n/N$ for all
$i\in\until{N}$ we say that the network is \emph{homogeneous}.

In Figure~\ref{fig:measurement_scenario} a scheme of the network measurement
scenario is depicted with the variables of interest.

 \begin{figure}[h]
   \centering
   \includegraphics[width=0.35\textwidth]{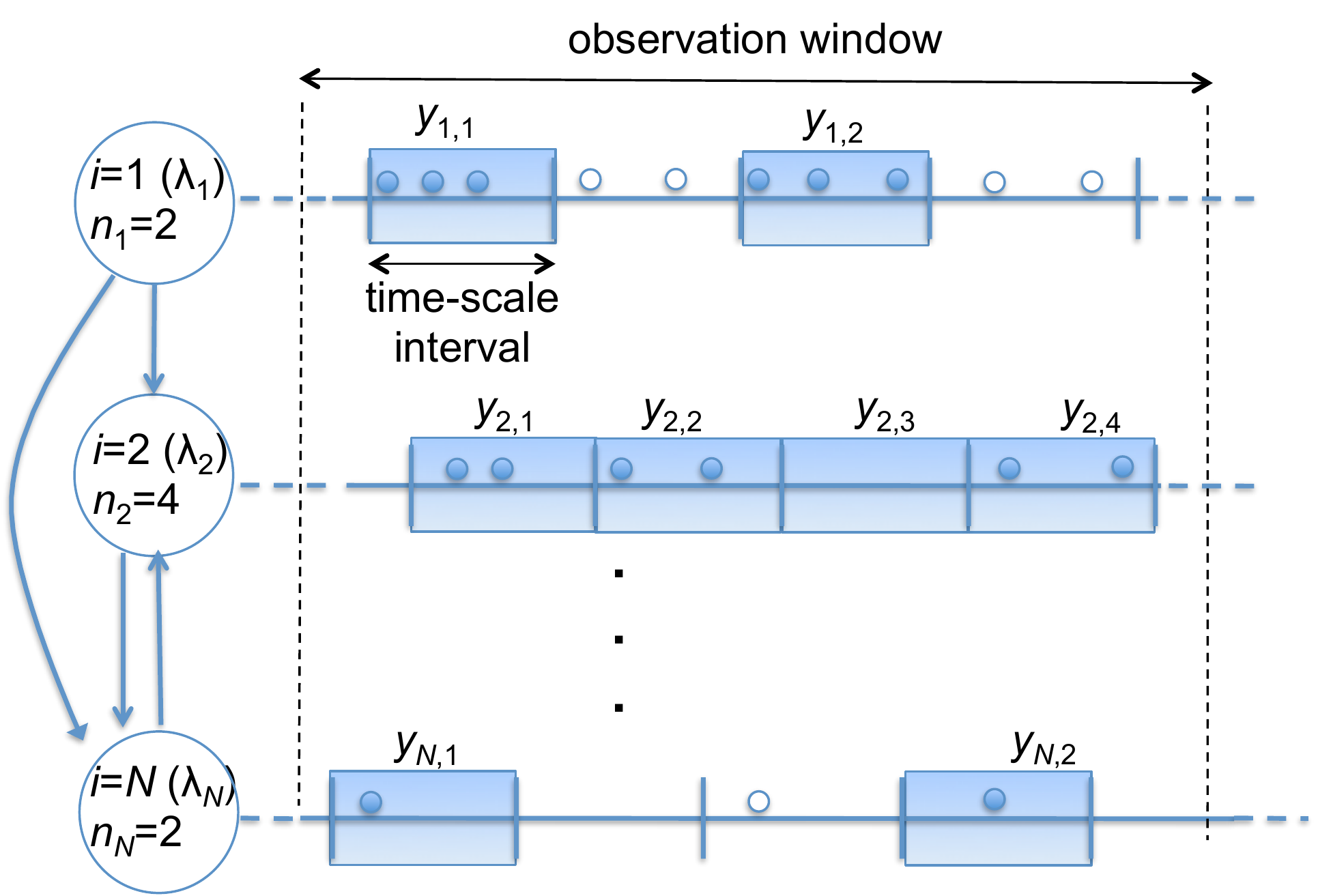}
   \caption{Scheme of the measurement scenario.}
   \label{fig:measurement_scenario}
 \end{figure}

We assume that the network evolution is triggered by a \emph{universal slotted
  time}, $t\in \naturalzero$, not necessarily known by the monitors.
The monitors communicate according to a time-dependent directed communication
graph $t \mapsto G(t) = (\until{N}, E(t))$, where $\until{N}$ are the monitor
\emph{identifiers} and the edge set $E(t)$ describes the communication among
monitors: $(i,k)\in E(t)$ if monitor $i$ communicates to $k$ at time
$t\in\naturalzero$. 
For each node $i$, the nodes sending information to $i$ at time $t$, i.e., the
set of $k\in \until{N}$ such that $(k,i)\in E(t)$ is the set of \emph{in-neighbors} of $i$
at time $t$, and is denoted by $N^{I}_i(t)$. 
We make the following minimal assumption on the graph connectivity. First,
  we recall that a fixed directed graph is strongly connected if for any pair of
  nodes, $i$ and $j$, there exists a directed path (i.e., a set of consecutive
  edges) from $i$ to $j$.
\begin{assumption}[Uniform joint strong connectivity]~
\label{assum:graph}
  There exists an integer $Q\ge1$ such that the graph
  $\left(\{1, \ldots, N\}, \bigcup_{\tau=tQ}^{(t+1)Q-1} E(\tau)\right)$ is strongly
  connected $\forall \, t\ge0$.
\end{assumption}
It is worth remarking that this network setup is very general, since it
naturally embeds asynchronous scenarios as well as missing measurements due to,
e.g., sensor failures.

\section{Distributed arrival rate estimation\\ via Empirical Bayes}
\label{sec:estimators}
In a \emph{decentralized set-up}, in which nodes do not communicate, each node
could estimate $\lambda_i$ based on the sample
$\{y_{i,\ell}\}_{\ell=1,\ldots,n_i}$ by simply computing the empirical mean of
the available measurements. That is, the decentralized estimator is
$$
\lambdaDEC_i
= \frac{1}{n_i} \sum_{\ell = 1}^{n_i} y_{i,\ell} = \frac{\sigma_i}{n_i}
$$
where $\sigma_i \eqdef \sum_{\ell=1}^{n_i} y_{i,\ell}$.

Notice that, the decentralized estimator turns out to be the ML estimator of
$\lambda_i$ when node $i$ can use only its own data.
However,  decentralized estimation yields reliable estimates
only when the number of samples $n_i$ is large enough. 

In our heterogenous set-up, it may happen that some nodes satisfy such a
condition, while other ones do not have enough data, thus resulting in a poor
estimation.
In this paper we propose a distributed estimation scheme in which every node,
especially the ones with fewer measurements, take advantage from cooperating
with neighboring nodes.

How the measurements at other nodes can help the local estimation at a given
node is a nontrivial issue and needs to be investigated by means of a suitable
probabilistic framework. 
Specifically, we adopt a Bayesian model in which all the unknown arrival
rates $\lambda_i$s are i.i.d. random variables ruled by a common probability
distribution that captures the spatial variability of the process intensity.

This model belongs to the family of Poisson mixtures and is broadly accepted
  in the (centralized) statistical literature for modeling non-homogeneous
  scenarios, see, e.g., \cite{freedman1962poisson,massey1996estimating}, and
  \cite[and references therein]{karlis2005mixed} for a through and extensive
  survey of properties and applications.
  As customary, see, e.g., \cite{casella}, we adopt as common distribution the
  conjugate prior of the Poisson, which is the Gamma distribution. This choice
  allows one to obtain a close-form expression for the posterior (predictive)
  distribution. 
  However, notice that, our model extends the classical Gamma-Poisson mixture
  since the sample sizes $n_i$, $i\in\until{N}$, can be different and thus
  exhibits even more flexibility.

A scheme
of the proposed Bayesian framework for our network scenario is depicted in
Figure~\ref{fig:network_scheme}.
\begin{figure}[h]
  \centering
  \includegraphics[width=0.35\textwidth]{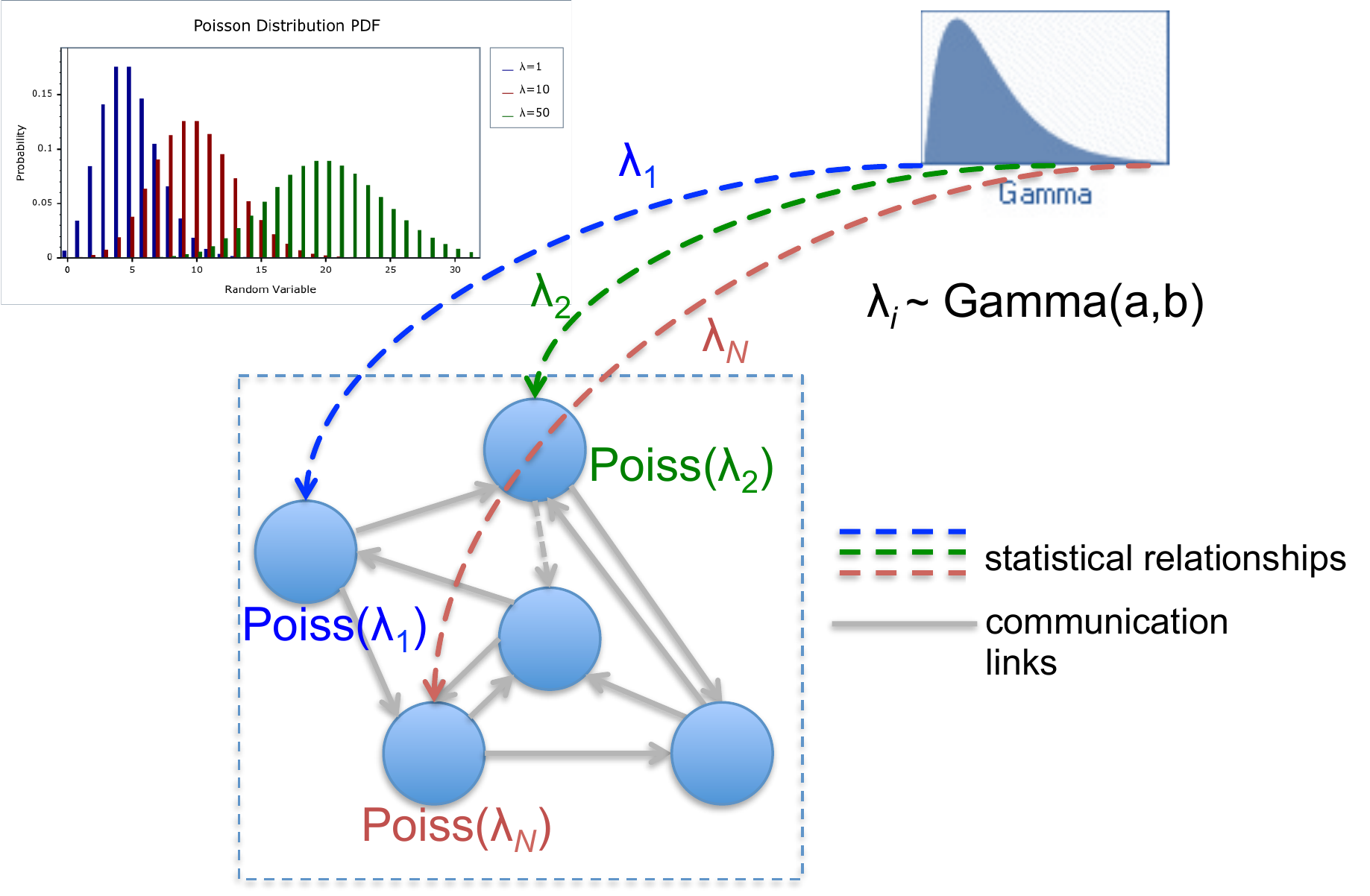}
  \caption{Scheme of the Bayesian model}
  \label{fig:network_scheme}
\end{figure}

\subsection{Empirical Bayes approach in monitoring networks}
In applying a Bayesian estimation approach to a network context, the assumption
that the prior distribution is fully known to all monitors is rather strong
and may be a severe limitation in realistic scenarios.  To overcome this
limitation we adopt the Empirical Bayes approach in which only the class of the
prior is known, 
 i.e., $\lambda_i \sim \text{Gamma}(a,b)$,
where the shape parameter $a$ is known, but the scale parameter $b$ is
unknown.
The assumption that $a$ is known, while only $b$ is unknown, says that only the
shape of the Gamma distribution (determined by the parameter $a$) is known,
while the scaling is not. This assumption is reasonable in many applications,
since it is a way to embed a rough information on the phenomenon, and is
customary for the sake of mathematical tractability, \cite{casella}.

The hyperparameter $b$ can be estimated via a ML procedure. To this aim, we need
the joint distribution of all measurements $\{y_{i,\ell} \}_{\ell=1,\ldots,n_i}$
for each agent $i$.
The likelihood function is the product of the marginal distributions of all agents
  \begin{equation}
    L(\bm{y}_1, \ldots, \bm{y}_N|b)=\prod_{i=1}^N p(\bm{y}_i |b)
    \label{eq:likelihood}
  \end{equation}
  where $\bm{y}_i=[y_{i,1}\ \cdots \ y_{i,n_i}]^T$. The marginal
  distribution of agent $i$ is derived from the joint distribution of $\bm{y}_i$
  and $\lambda_i$, 
\begin{align}
  p(\bm{y}_i|b) &=\int_0^{\infty} \left( \prod_{\ell=1}^{n_i} f(y_{i,\ell}|
    \lambda_i,b) \right) p(\lambda_i|b) \mathrm{d}\lambda_i \nonumber \\
  &= \int_0^{\infty} \frac{\lambda_i^{\sigma_i}
    e^{-n_i\lambda_i}}{\prod_{\ell=1}^{n_i} y_{i,\ell}!} \frac{\lambda_i^{a-1}
    e^{-\lambda_i/b}}{\Gamma(a)b^a} \mathrm{d}\lambda_i \nonumber\\
  &= \frac{\Gamma(\sigma_i+a)}{ \Gamma(a) b^a \prod_{\ell=1}^{n_i} y_{i,\ell}!}
  \left(\frac{b}{n_ib+1}\right)^{\sigma_i+a} \label{eq:neg_bin}
\end{align}
By using eq. \eqref{eq:neg_bin} into eq. \eqref{eq:likelihood} the likelihood is rewritten as:
\begin{equation}
L(\bm{y}_1, \ldots, \bm{y}_N|b) \propto \frac{1}{b^{aN}}  \prod_{i=1}^N \left(\frac{b}{n_ib+1}\right)^{\sigma_i+a}.  %
\label{eq:like}
\end{equation}
Thus, the ML estimator $\bML$ of $b$ can be found by solving the following
optimization problem
\begin{align}
\label{eq:b_hat}
\bML &= \argmin_{b\in\mathbb{R}_+} \!\left\{ \! aN   \log b - \! \sum_{i=1}^N
  (\sigma_i+a) \log\! \left(\frac{b}{n_ib+1}\right) \right\} \!\! \nonumber\\
         &= \argmin_{b\in\mathbb{R}_+}  \sum_{i=1}^N f(b; n_i,\sigma_i). 
\end{align}

The problem can be solved in closed-form only for the \emph{homogeneous} case
where all $n_i$s are equal, i.e., for $n_i=n/N$, recalling that $n$ is the total
number of measurements. In this case the ML estimator of $b$ based on the entire
set of measurements is given by
\begin{equation*}
  \bhom=\frac{1}{an} \sum_{i=1}^N \sigma_i  = \frac{\sigma}{an} 
\end{equation*}
where $n \eqdef \sum_{i=1}^N n_i$, and $\sigma \eqdef \sum_{i=1}^N \sigma_i$.

After obtaining an estimate for $b$, the Empirical Bayes estimator of the
arrival rate $\lambda_i$ that minimizes the Mean Square Error (MMSE) can be
obtained by computing the conditional mean of the posterior distribution
$p(\lambda_i|\bm{y}_i,b)$.  The latter is given by the ratio between the joint
pdf $p(\bm{y}_i,\lambda_i|b)$ and the marginal pdf $p(\bm{y}_i|b)$ as from
eq. \eqref{eq:neg_bin}, i.e.,
  \begin{equation}
    p(\lambda_i|\bm{y}_i,b) = \frac{\lambda_i^{\sigma_i+a-1}
      e^{-\lambda_i\frac{n_ib+1}{b}}}{\Gamma(\sigma_i+a)}
    \left(\frac{b}{n_ib+1}\right)^{-\sigma_i-a} %
    \label{eq:gamma_post}
  \end{equation}
  Eq. \eqref{eq:gamma_post} is a Gamma pdf with parameters
  $(\sigma_i+a,\frac{b}{n_ib+1})$, hence the \emph{Empirical Bayes MMSE estimator} of each
  $\lambda_i$ is
\begin{equation}
\label{eq:lambda_i}
  \lambdaEB_i = \E[\lambda_i|\bm{y}_i,\bML] = \frac{\bML}{\bML n_i+1} (a + \sigma_i).
\end{equation}
\begin{remark}
  It is worth highlighting that the Bayesian estimate is especially useful when
  $n_i$ is small.  In fact, nodes improve the quality of their local estimate by
  combining the frequentist estimation (based only on local observations) with a
  correction term (based on a prior global knowledge), which is estimated in a
  cooperative way.  Indeed, we can rewrite
  $\lambdaEB_i = \rho\frac{\sigma_i}{n_i} + (1-\rho) a \bML$, with
  $\rho = \frac{\bML n_i}{\bML n_i+1}$.
When the local information is abundant ($n_i\rightarrow\infty$ and thus
$\rho\rightarrow 1$), the MMSE estimator \eqref{eq:lambda_i} tends towards
$\lambdaDEC_i$, meaning that when $n_i$ is large no further
information can be inferred from the network.  Conversely, when local
information is scarce, i.e., the sample size $n_i$ is small, even one, the MMSE
estimator \eqref{eq:lambda_i} approaches the estimate of the global mean
$\E[\lambda_i] = a b$. 
\end{remark} 

In the following we will also consider an \emph{ad-hoc estimator} obtained by
using $\bhom$ instead of the optimal $\bML$, i.e.,
\begin{equation*}
  \lambdaAH_i \eqdef
  \frac{\bhom}{\bhom\, n_i+1} (a + \sigma_i)  =
  \frac{\sigma}{an+\sigma n_i} (a + \sigma_i).
\end{equation*}

Clearly, the estimator $\lambdaEB_i$ follows the ``rationale'' of the Empirical
Bayes approach and, therefore,
has performance guarantees inherited from the ML procedure.
Conversely, the ad-hoc estimator is an alternative that at the moment has the
only advantage of having a closed-form expression, whose performances need to be
understood. In the rest of the paper we will show that, not only this estimator
leads to a simpler and faster distributed algorithm, but also that for large
number of agents performs as $\lambdaEB_i$.

\subsection{Distributed estimators}
From eq. \eqref{eq:lambda_i} it is clear that each agent can compute the
Empirical Bayes MMSE estimator provided it knows $\bML$. Optimization problem
\eqref{eq:b_hat}, giving the ML estimator of $b$, has a separable cost (i.e.,
the sum of $N$ local costs), hence it can be solved by using available
distributed optimization algorithms for asynchronous networks
\cite{zanella2012asynchronous,nedic2013distributed}.
We propose a distributed estimator in which each node implements the local
update rule of the chosen distributed optimization algorithm.

The ML estimation problem~\eqref{eq:b_hat} is not guaranteed to be convex in
general. This is quite common in the estimation literature. 
However, since the function is coercive,
there exists (at least) a minimizer and, thus, it is reasonable to apply descent
algorithms, as the one in \cite{nedic2013distributed}, which, under suitable
conditions, guarantee convergence to a local minimizer.

The {\bf \EBestimator}\, is as follows. At each $t\in\naturalzero$, each agent $i$
stores a local state $\xi_i(t)$, an estimate $\bML_i(t)$ of $\bML$ and an
estimate $\lambdaEB_i(t)$ of $\lambdaEB_i$. The node initializes its local state
$\xi_i$ to an initial value $\xi_{i0}$ chosen according to the distributed
optimization algorithm in use, and sets $\bML_i(0) = \sigma_i/(an_i)$ (which
would be the solution of \eqref{eq:b_hat} if $i$ were the only agent). Then it
updates its estimate of $\bML$ by using the local update rule of the chosen
distributed optimization algorithm, and updates the current estimate
$\lambdaEB_i(t)$ by using \eqref{eq:lambda_i}.
The algorithm is defined formally in the following table. For each
$t\in\naturalzero$, let $\{\hat{\xi}_k(t)\}_{k\in N^{I}_i(t)}$ be the collection of
states of the in-neighbors of node $i$, \texttt{opt\_local} the local update
of the chosen distributed optimization algorithm, and $\gamma(t)$ an algorithm
parameter as, e.g., a time-varying step-size.
\begin{center}
\begin{minipage}[c]{0.85\linewidth}
\small
  \begin{algorithm}[H]
    \caption{\EBestimator}
    \label{alg:EB_estimator}
    \begin{algorithmic}
      \STATE {\bf Initialization:} $\xi_i(0) = \xi_{i0}$, $\bML_i(0) = \frac{\sigma_i}{an_i}$.\\
      \STATE {\bf Iterate:}
      \begin{align*}
        (\bML_i&(t+1),\hat{\xi}_i(t+1)) =\nonumber\\ 
        &\texttt{opt\_local}\left(\bML_i(t),\hat{\xi}_i(t),
          \{\hat{\xi}_k(t)\}_{k\in N^{I}_i(t)};\gamma(t)\right)\!,\\[1.0ex]
        \lambdaEB_i&(t+1) = \frac{\bML_i(t+1)}{\bML_i(t+1) n_i+1} (a +
        \sigma_i). \nonumber
      \end{align*}
    \end{algorithmic}
  \end{algorithm}
\end{minipage}
\end{center}
As an example, we show the \texttt{opt\_local} function for the distributed
\emph{subgradient-push} method proposed in \cite{nedic2013distributed}. To be
consistent with the notation in \cite{nedic2013distributed} we let
$\xi_i = (v_i,y_i,x_i)$, which is initialized to
$(v_i(0) ,y_i(0),x_i(0)) = (1,1,x_{i0})$, with $x_{i0}$ an arbitrary initial
value. Also, $d_k(t)$ denotes the number of out-neighbors of node $k$ at
  time $t$.
%
\begin{center}
\begin{minipage}[c]{0.6\linewidth}  
\small
{\bf function} $\xi_i(t+1) =\\ 
\phantom{aaa}\texttt{opt\_local}\left(\bML_i(t),\hat{\xi}_i(t),
  \{\hat{\xi}_k(t)\}_{k\in N^{I}_i(t)};\gamma(t)\right)\!$
\begin{align*}
  v_i(t+1) & = \sum_{k\in N_i^I(t)\cup \{i\}} \frac{x_k(t)}{d_k(t)}\\[1.2ex]
  y_i(t+1)  & = \sum_{k\in N_i^I(t)\cup \{i\}} \frac{y_k(t)}{d_k(t)}\\[1.2ex]
  \bML_i(t+1) &= \frac{v_i(t+1)}{y_i(t+1)} \\[1.2ex]
  x_i(t+1) & = v_i(t+1) - \gamma(t+1) \nabla f(\bML_i(t+1);n_i, \sigma_i)
\end{align*}
\end{minipage}
\end{center}
\noindent with $\nabla f(b;n_i, \sigma_i) = \frac{a}{b (n_i b + 1)} (a n_i b - \sigma_i)$.


If problem \eqref{eq:b_hat} has a unique minimizer, the distributed
  optimization algorithm guarantees that all nodes reach consensus on the global
  minimizer. That is,
\[
\lim_{t\rightarrow\infty} \bML_i(t) = \bML, \qquad \text{for all $i\in\until{N}$}.
\]
From the convergence properties of the chosen distributed optimization algorithm
it follows immediately that the proposed distributed estimator asymptotically
computes at each node $i$ the Empirical Bayes MMSE estimator of
$\lambda_i$. 

However, most of the available distributed optimization algorithms, as the ones
in \cite{zanella2012asynchronous,nedic2013distributed}, need the tuning of a
global parameter (we denoted it $\gamma$), and typically exhibit a
sub-exponential convergence even in static graphs. To overcome these drawbacks,
we propose an alternative distributed estimator with reduced complexity that,
although suboptimal, will be shown to perform comparably to the optimal one.

The \textbf{\AHestimator\,} is defined as follows.
For each $t\in\naturalzero$, each node $i\in\until{N}$ stores in memory two local
states $s_i(t)$ and $\eta_i(t)$, an estimate $\bhom_i(t)$ of
$\bhom$, and an estimate $\lambdaAH_i(t)$ of
$\lambdaAH_i$. Let $w_{ik}(t)\in\real_{\geq0}$
be a set of weights such that $w_{ik}(t)>0$ if $(i,k)\in E(t)$ or $k=i$, and
$w_{ik}(t)=0$ otherwise. The \AHestimator\ is given in the following table. 
\begin{center}
\begin{minipage}[c]{0.85\linewidth}
  \begin{algorithm}[H]
    \caption{\AHestimator}
    \label{alg:ad-hoc_estimator}
    \begin{algorithmic}
      \STATE {\bf Initialization:} $s_i(0) = \sigma_i$, $\eta_i(0) = n_i$, $\bhom_i(0) = \sigma_i/(an_i)$, $\lambdaAH_i(0) =
      \frac{\bhom_i(0)}{\bhom_i(0) n_i+1} (a + \sigma_i)$.\\
      \STATE {\bf Iterate:}
      \begin{align}
        s_i(t+1)       &= \sum_{k\in N_i^I(t)\cup \{i\}} w_{ik}(t) s_k(t) \nonumber\\[1.0ex]
        \eta_i(t+1)     &= \sum_{k\in N_i^I(t) \cup \{i\}} w_{ik}(t) \eta_k(t)
        \nonumber\\
        \bhom_i(t+1) &= \frac{1}{a}\,\frac{s_i(t+1)}{\eta_i(t+1)} \nonumber\\[1.0ex]
        \lambdaAH_i(t+1) &= \frac{\bhom_i(t+1)}{\bhom_i(t+1) n_i+1} (a +
        \sigma_i). \nonumber
      \end{align}
    \end{algorithmic}
  \end{algorithm}
\end{minipage}
\end{center}

\medskip

We can rewrite the update of $\eta_i(t)$ and $s_i(t)$ by using an aggregate
dynamics. That is, let $\eta(t) = [\eta_1(t) \ldots \eta_N(t)]^T$ and $s(t) =
[s_1(t) \ldots s_N(t)]^T$ be the aggregate states, their dynamics is given by
\begin{equation}
\begin{split}
  s(t+1) &= W(t) s(t)\\
  \eta(t+1) &= W(t) \eta(t)
\end{split}
\label{eq:s_eta_dyn}
\end{equation}
with $s(0) = [\sigma_1 \ldots \sigma_N]^T \eqdef \sigmavec$, $\eta(0) = [n_1 \ldots
n_N]^T \eqdef \nvec$ and $W(t)$ the matrix with elements $w_{ij}(t)$.
Let us denote 
\begin{equation}
\Phi(t) = W(t-1) \cdots W(0) 
\label{eq:state_trans_matrix}
\end{equation}
the state transition matrix associated to each one of the linear systems
\eqref{eq:s_eta_dyn}, so that 
\begin{equation}
  \begin{split}
      s(t) &= \Phi(t) s(0)\\
     \eta(t) &= \Phi(t)\eta(0).
     \label{eq:s_eta_evol}
   \end{split}
\end{equation}
For the algorithm to converge we need the following assumption together with the
Assumption~\ref{assum:graph} (uniform joint connectivity of the communication digraph).

\begin{assumption}[Properties of $W(t)$] %
\label{assum:col_stochastic}
  For each $t\in\naturalzero$, the matrix $W(t)$ is column stochastic, i.e.,
  $\sum_{i=1}^n w_{ik}(t) = 1$, and there exists a positive constant $\alpha>0$
  such that $w_{ii}(t)>\alpha$ and $w_{ik}(t) \in \{0\} \union [\alpha,1]$.
\end{assumption}

\begin{remark}
  It is worth noting that the column stochasticity assumption above is not the
  usual assumption used in linear consensus algorithms in which row
  stochasticity is assumed.
\end{remark}

\begin{lemma}%
\label{lem:weak_ergodicity}
  Let $t\mapsto G(t)$ be a uniformly jointly strongly connected graph
  (Assumption~\ref{assum:graph}) and $\{W(t)\}_{t\geq0}$ a sequence of matrices
  satisfying Assumption~\ref{assum:col_stochastic}. Then denoting $\Phi(t) =
  W(t) W(t-1) \cdots W(0)$ with $(i,k)$ element $\phi_{ik}(t)$, the following holds true.
  \begin{enumerate}
  \item The matrix sequence $\{W(t)\}_{t\geq0}$ is weakly ergodic, i.e., 
    \[
    \lim_{t\rightarrow\infty} | \phi_{ik}(t) - \phi_{ih}(t) | = 0 
    \]
    for all $i,k,h \in \until{N}$.
  \item There exists some $\mu>0$ such that for all $t\geq 0$,
    $\textbf{1}^T\Phi(t) \geq \mu>0$, i.e., for any $i\in\until{N}$,
    \[
    \sum_{k=1}^N \phi_{ik}(t) > \mu.
    \]
  \end{enumerate}
\end{lemma}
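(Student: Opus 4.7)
My strategy is to reduce both claims to a single stronger intermediate fact: there exist $T^{*}\in\naturalzero$ and $\delta>0$ such that every entry of $\Phi(t)$ is bounded below by $\delta$ for all $t\ge T^{*}$. Granted this, claim (ii) is immediate, since $\sum_{k=1}^{N}\phi_{ik}(t)\ge N\delta$ for $t\ge T^{*}$, while for each of the finitely many $t<T^{*}$ the self-loop weights $w_{ii}(\tau)\ge\alpha$ alone already yield $\phi_{ii}(t)\ge\alpha^{t}\ge\alpha^{T^{*}}$, so a uniform $\mu>0$ always exists.

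The intermediate fact I would prove by a graph-theoretic construction. Writing $\Psi(t,s):=W(t-1)W(t-2)\cdots W(s)$, so that $\Phi(t)=\Psi(t,0)$, the plan is to show that for $T^{*}=QN$ and every $s\ge 0$, each entry of $\Psi(s+T^{*},s)$ is at least $\alpha^{T^{*}}$. The idea is that by Assumption~\ref{assum:graph} the union of the edge sets over any $Q$-window is strongly connected, so between times $s$ and $s+T^{*}$ one can exhibit, for every ordered pair $(k,i)$, a time-respecting walk of length exactly $T^{*}$ from $k$ to $i$, obtained by concatenating a genuine edge-path (of length at most $N-1$) with self-loops whenever no motion is needed. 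Each step contributes a weight $\ge\alpha$, and summing over such walks gives $[\Psi(s+T^{*},s)]_{ik}\ge\alpha^{T^{*}}$. This lower bound extends to $\Phi(t)$ itself via the factorization $\Phi(t)=\Psi(t,t-T^{*})\,\Phi(t-T^{*})$ and the column stochasticity of $\Phi(t-T^{*})$:
\[
\phi_{ik}(t)\;=\;\sum_{j}[\Psi(t,t-T^{*})]_{ij}\,[\Phi(t-T^{*})]_{jk}\;\ge\;\alpha^{T^{*}}\sum_{j}[\Phi(t-T^{*})]_{jk}\;=\;\alpha^{T^{*}},
\]
so $\delta=\alpha^{T^{*}}$ works.

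For claim (i), once the uniform lower bound on the entries is available, weak ergodicity follows by a standard contraction argument on column differences. For any pair $k,h\in\until{N}$, the vector $d(t):=\Phi(t)(e_{k}-e_{h})$ satisfies $d(t+1)=W(t)d(t)$ and $\mathbf{1}^{T}d(t)=0$ by column stochasticity of $\Phi(t)$. Whenever a column stochastic matrix $M$ has every entry $\ge\eta$, the decomposition $M=\eta\,\mathbf{1}\mathbf{1}^{T}+(1-N\eta)M'$ with $M'$ column stochastic, combined with $\mathbf{1}^{T}d=0$, yields $\|Md\|_{1}\le(1-N\eta)\|d\|_{1}$. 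Applying this block-wise with $M=\Psi(s+T^{*},s)$ and $\eta=\alpha^{T^{*}}$, and using that every $W(t)$ is nonexpansive in $\|\cdot\|_{1}$, I obtain
\[
\|d(t)\|_{1}\;\le\;\bigl(1-N\alpha^{T^{*}}\bigr)^{\lfloor t/T^{*}\rfloor}\|d(0)\|_{1},
\]
from which $|\phi_{ik}(t)-\phi_{ih}(t)|\to 0$ follows for all $i,k,h$.

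The main obstacle is the graph-theoretic construction of the time-respecting walk of length exactly $T^{*}$: one has to exploit the self-loop weights in tandem with the joint connectivity window $Q$ to guarantee that for every ordered pair of nodes a walk of the prescribed length, with the claimed weight product, exists. Once the uniform lower bound is secured, the remaining algebra is a routine application of the convex-combination trick underlying the Dobrushin coefficient of ergodicity, and closely mirrors the analysis of push-sum and subgradient-push algorithms in \cite{nedic2013distributed}.
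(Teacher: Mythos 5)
Your argument is correct in substance, but it is worth noting that the paper does not actually prove this lemma: it is invoked as a known result with a pointer to \cite{nedic2013distributed} (and to Tsitsiklis et al., Seneta, Jadbabaie et al., B\'en\'ezit et al.), so your proposal supplies a self-contained proof where the paper only cites. Your route is the standard one underlying those references: first establish a uniform positive lower bound $\alpha^{T^*}$ on all entries of the $T^*$-step transition matrices, then get (ii) from column stochasticity of the earlier factor (and from the diagonal bound $\phi_{ii}(t)\ge\alpha^t$ for the finitely many initial times), and get (i) from the Dobrushin-type contraction $\|Md\|_1\le(1-N\eta)\|d\|_1$ for vectors $d$ with $\mathbf{1}^Td=0$, which in fact yields geometric decay, i.e.\ more than the stated weak ergodicity. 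One point in your sketch is imprecise and is exactly where care is needed: Assumption~\ref{assum:graph} guarantees strong connectivity only of the unions over the \emph{aligned} windows $\{tQ,\dots,(t+1)Q-1\}$, not of every sliding $Q$-window, and a path of length at most $N-1$ in a union graph cannot in general be traversed as a time-respecting walk, since its edges need not appear in the right temporal order. The correct construction (compatible with your budget $T^*=QN$) is the usual one: a window of length $QN$ starting at an arbitrary $s$ contains at least $N-1$ complete aligned $Q$-blocks; the set of nodes reachable from $k$ by time-respecting walks is nondecreasing thanks to the self-loops ($w_{ii}(t)\ge\alpha$), and in each aligned block strong connectivity of the union graph forces at least one edge from the current reachable set to its complement, so the set grows by at least one node per block and covers $\until{N}$ within the window. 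With that repair, your intermediate bound $[\Psi(s+T^*,s)]_{ik}\ge\alpha^{T^*}$ holds for all $s$, and the rest of your argument goes through as written.
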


The result is well-known and can be found, e.g., in
\cite{nedic2013distributed}. Further references on this result under the same or
different connectivity assumptions are
\cite{tsitsiklis1986distributed,seneta2006non,jadbabaie2003coordination,benezit2010weighted}.

\medskip
\begin{proposition}
  \label{prop:ad_hoc_convergence}
  Let Assumption~\ref{assum:graph} hold. Then the \AHestimator\,
  (Algorithm~\ref{alg:ad-hoc_estimator}) satisfies
  \[
  \begin{split}
    \lim_{t\rightarrow\infty} \bhom_i(t) &= \bhom\\
    \lim_{t\rightarrow\infty} \lambdaAH_i(t) &= \lambdaAH_i.
  \end{split}
  \]
\end{proposition}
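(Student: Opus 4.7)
The plan is to reduce the convergence claim to a direct consequence of Lemma~\ref{lem:weak_ergodicity} applied to the linear dynamics~\eqref{eq:s_eta_dyn} for $s(t)$ and $\eta(t)$, and then conclude the second statement by continuity of the map $b\mapsto \frac{b}{bn_i+1}(a+\sigma_i)$.

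First, observe that Assumption~\ref{assum:col_stochastic} (column stochasticity of $W(t)$) together with the initializations $s(0)=\sigmavec$ and $\eta(0)=\nvec$ implies that $\mathbf{1}^T s(t) = \sigma$ and $\mathbf{1}^T \eta(t) = n$ for all $t$. This will not be used directly for convergence, but confirms that the total mass of $\sigma_i$'s and $n_i$'s is preserved along the dynamics. The key point is that, by~\eqref{eq:s_eta_evol}, $s_i(t) = \sum_{k=1}^N \phi_{ik}(t)\,\sigma_k$ and $\eta_i(t) = \sum_{k=1}^N \phi_{ik}(t)\,n_k$.

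Next, I would exploit weak ergodicity. Let $\bar\phi_i(t) \eqdef \frac{1}{N}\sum_{k=1}^N \phi_{ik}(t)$. Part (i) of Lemma~\ref{lem:weak_ergodicity} gives $\max_{k} |\phi_{ik}(t)-\bar\phi_i(t)| \to 0$ as $t\to\infty$, so writing
\begin{align*}
s_i(t) &= \bar\phi_i(t)\,\sigma \,+\, \sum_{k=1}^N \bigl(\phi_{ik}(t)-\bar\phi_i(t)\bigr)\sigma_k, \\
\eta_i(t) &= \bar\phi_i(t)\,n \,+\, \sum_{k=1}^N \bigl(\phi_{ik}(t)-\bar\phi_i(t)\bigr) n_k,
\end{align*}
the residual terms are $o(1)\cdot\sigma$ and $o(1)\cdot n$ respectively. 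To divide, I need $\bar\phi_i(t)$ bounded away from zero: part (ii) of the lemma gives $\sum_{k=1}^N \phi_{ik}(t) > \mu$, hence $\bar\phi_i(t) > \mu/N$ for every $t$. Therefore
\[
\frac{s_i(t)}{\eta_i(t)} \;=\; \frac{\sigma + \epsilon^s_i(t)/\bar\phi_i(t)}{n + \epsilon^\eta_i(t)/\bar\phi_i(t)} \;\xrightarrow[t\to\infty]{}\; \frac{\sigma}{n},
\]
where $\epsilon^s_i(t),\epsilon^\eta_i(t)\to 0$. Consequently, $\bhom_i(t+1)=\frac{1}{a}\frac{s_i(t+1)}{\eta_i(t+1)}\to \frac{\sigma}{an}=\bhom$.

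For the second limit, the update $\lambdaAH_i(t+1)=\frac{\bhom_i(t+1)}{\bhom_i(t+1)\,n_i+1}(a+\sigma_i)$ is a continuous function of $\bhom_i(t+1)$ (the denominator never vanishes since $\bhom_i(t)\ge 0$), so $\lambdaAH_i(t)\to \frac{\bhom}{\bhom n_i+1}(a+\sigma_i) = \lambdaAH_i$. The only delicate step in this plan is controlling the denominator $\eta_i(t)$ uniformly in $t$; this is exactly what part (ii) of Lemma~\ref{lem:weak_ergodicity} supplies, and without it the ratio argument would fail even though the numerator and denominator individually may not converge.
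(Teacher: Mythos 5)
Your proof is correct and follows essentially the same route as the paper's: both invoke Lemma~\ref{lem:weak_ergodicity} on the products $\Phi(t)$, use the representation $s_i(t)=\sum_k\phi_{ik}(t)\sigma_k$, $\eta_i(t)=\sum_k\phi_{ik}(t)n_k$, and conclude $s_i(t)/\eta_i(t)\to\sigma/n$ and hence $\bhom_i(t)\to\bhom$, $\lambdaAH_i(t)\to\lambdaAH_i$. If anything, you are more explicit than the paper in using part (ii) of the lemma (the lower bound $\mu$) to justify dividing by the common row value, a step the paper's proof leaves implicit.
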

\begin{proof}
  From Lemma~\ref{lem:weak_ergodicity}, $\{W(t)\}_{t\geq0}$ is weakly ergodic so
  that there exists a sequence of stochastic vectors $\bar{\phi}(t) =
  [\bar{\phi}_1(t) \ldots \bar{\phi}_N(t)]$ such that for each $i\in\until{N}$,
  $\phi_{ik}(t) \rightarrow \bar{\phi}_i(t)$ for all $k\in\until{N}$. From
  eq. \eqref{eq:s_eta_evol} it follows that 
\[
s_i(t) \rightarrow \bar{\phi}_i(t)
  \sum_{k=1}^N s_k(0) = \bar{\phi}_i(t) \sum_{k=1}^N \sigma_k(0)
\]
 and 
\[
\eta_i(t)
  \rightarrow \bar{\phi}_i(t) \sum_{k=1}^N \eta_k(0) = \bar{\phi}_i(t) \sum_{k=1}^N
  n_k(0).
\]
 Thus, $\frac{s_i(t)}{\eta_i(t)} \rightarrow \frac{\sum_{k=1}^N \sigma_k(0)}{
  \sum_{k=1}^N n_k(0)}$, so that the proof follows.
\end{proof}

\begin{remark}[Connectivity assumption]
  The result of Proposition~\ref{prop:ad_hoc_convergence} can be proven also if
  Assumption~\ref{assum:graph} is replaced by Assumption~1 in
  \cite{benezit2010weighted}, that is if $\{W(t)\}_{t\geq0}>0$ is a
    stationary and ergodic sequence of stochastic matrices with positive
    diagonals, and $\E[W]$ is irreducible. The result can be proven by following
    the same line of proof developed therein. Thus, the ad-hoc estimator could
  be implemented also in a monitoring network with stochastic gossip
  communication.
\end{remark}

\begin{remark}
  The proposed algorithms are based on the assumption that the arrival-rates are
  constant in the observation window. Therefore, one can apply the algorithms
  iteratively by recomputing the estimates on different windows of data, thus
  getting time-varying arrival-rates.
\end{remark}

To conclude this section, we point out that the update of the subgradient push
optimization algorithm includes a push-sum consensus step, i.e., a diffusive
update based on a column stochastic matrix (with coefficients
$w_{ik}(t) = 1/d_k(t)$ for $k\in N_i(t)$), which is the same used in the
\AHestimator\, (Algorithm~\ref{alg:ad-hoc_estimator}).
However, in the subgradient push this update is part of a gradient
descent step. In fact, the role and the evolution of the involved variables,
i.e., $(s_i,\eta_i)$ and $(v_i,y_i)$ respectively, are different as well as the
convergence rates. Indeed, the ad-hoc estimator exhibits the exponential
convergence of linear consensus protocols as opposed to the much slower
$O(\ln t/\sqrt{t})$ rate of the subgradient-push \cite{nedic2013distributed}.

These considerations, together with the lower computational burden, make the
\AHestimator\, appealing even though not optimal. In the following section we
will show that it actually performs very closely to the MMSE estimator.

\section{Estimator performance analysis} %
\label{sec:theoretical_analysis} %
In this section we analyze the performance of the proposed distributed
estimators. In particular, for the \EBestimator, lacking a closed form for the
update rule of $\bML$ and, in turn, of $\lambdaEB$, we are able to derive only
steady-state ($t\rightarrow \infty$) and asymptotic ($N\rightarrow\infty$)
bounds.
Conversely, for the \AHestimator\, a transient analysis (at any
$t\in\naturalzero$) can be derived for any $N\in\natural$. 

To develop the performance analysis, we consider a ``special'' agent, we label
it as $j$, that does not participate to the computation of $\bML$ (respectively
$\bhom$). Under this assumption, it turns out that $\sigma_j$ is independent of
$\bML$ (respectively $\bhom$). Clearly, $\sigma_j$ is also independent of any
local estimate 
$\bML_i(t)$ (respectively $\bhom_i(t)$), $i\in\until{N}$, at any $t\in\naturalzero$. Consistently, here we
can simply assume that for the computation of $\lambdaEB_j(t)$ (respectively
$\lambdaAH_j(t)$), agent $j$ uses the local estimate of $b$ computed by one of
its in-neighbors, that is, e.g., $\bhom_j(t) = \bhom_i(t)$ for some
$i\in N_j^I(t)\subset\until{N}$.

Notice that, in practice, the analysis developed for such an agent $j$ holds
approximately for any node in the network participating to the distributed
computation. Indeed, due to the large number of agents $N$, for any agent $i$
(running the distributed algorithm), $\bML$ (respectively $\bhom$) and
$\sigma_i$ are very weakly correlated. The validity of this statement will be
corroborated by the Monte Carlo analysis in the next section.

We start by deriving the Cramer-Rao lower bound (CRB) for any unbiased estimator
$\hat{b}$ of $b$, i.e.,
\[
\VAR[\hat{b}] \geq \mathrm{CRB}(b).
\]
\begin{lemma}
The CRB for the estimation of the hyperparameter $b$ is given by
\begin{equation}
  \mathrm{CRB}(b) = \frac{b/a}{\sum_{i=1}^N
    \frac{n_i}{n_ib+1}}.
\label{eq:CRB}
\end{equation}
\label{lem:CRB}
\end{lemma}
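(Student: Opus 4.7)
The plan is to invoke the standard Cramér–Rao bound, namely $\mathrm{CRB}(b)=1/I(b)$ where $I(b)$ is the Fisher information, and compute $I(b)$ directly from the likelihood \eqref{eq:like} using the relation $I(b) = -\E[\partial^2 \log L/\partial b^2]$. The key observation that makes the expectation tractable is that the second derivative depends linearly on the sufficient statistics $\sigma_i$, whose marginal mean can be computed in closed form via the tower property.

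First I would take the logarithm of \eqref{eq:like}, dropping the constant that does not depend on $b$, to obtain
\begin{equation*}
\log L = -aN\log b + \sum_{i=1}^N (\sigma_i+a)\bigl[\log b - \log(n_i b+1)\bigr].
\end{equation*}
Differentiating with respect to $b$ once yields the score
\begin{equation*}
\frac{\partial \log L}{\partial b} = \sum_{i=1}^N \frac{\sigma_i}{b} - \sum_{i=1}^N \frac{(\sigma_i+a)\,n_i}{n_i b+1},
\end{equation*}
and a second differentiation produces
\begin{equation*}
\frac{\partial^2 \log L}{\partial b^2} = -\sum_{i=1}^N \frac{\sigma_i}{b^2} + \sum_{i=1}^N \frac{(\sigma_i+a)\,n_i^2}{(n_i b+1)^2}.
\end{equation*}

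Next I would evaluate the expectation. Since $y_{i,\ell}\,|\,\lambda_i\sim\mathrm{Poisson}(\lambda_i)$ and $\lambda_i\sim\mathrm{Gamma}(a,b)$, the tower property gives $\E[y_{i,\ell}]=\E[\lambda_i]=ab$, hence $\E[\sigma_i]=n_i a b$. Plugging this in and changing sign yields
\begin{equation*}
I(b) = \sum_{i=1}^N \frac{n_i a}{b} - \sum_{i=1}^N \frac{a(n_i b+1)\,n_i^2}{(n_i b+1)^2} = a\sum_{i=1}^N \frac{n_i(n_ib+1)-n_i^2 b}{b(n_i b+1)} = \frac{a}{b}\sum_{i=1}^N \frac{n_i}{n_i b+1}.
\end{equation*}
Inverting gives the claimed $\mathrm{CRB}(b) = (b/a)\bigl/\sum_{i=1}^N n_i/(n_ib+1)$.

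The only real obstacle is bookkeeping in the algebraic simplification of $I(b)$, in particular making sure the $(\sigma_i+a)$ factor combines with $\E[\sigma_i]=n_i ab$ to give the cancellation $(n_i b+1)$ in the numerator that collapses the denominator from squared to linear. No regularity conditions beyond interchange of differentiation and expectation are needed, since the negative binomial marginal \eqref{eq:neg_bin} is smooth in $b$ on $\mathbb{R}_+$ and dominated convergence applies on any compact subset of the parameter space.
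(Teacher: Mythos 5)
Your proposal is correct and follows essentially the same route as the paper's own proof: compute the Fisher information $I(b)=-\E[\partial^2\log L/\partial b^2]$ from the likelihood \eqref{eq:like}, use $\E[\sigma_i]=\E[n_i\lambda_i]=n_iab$ via the tower property, and simplify to $I(b)=\frac{a}{b}\sum_{i=1}^N\frac{n_i}{n_ib+1}$, then invert. The only difference is cosmetic (you rearrange the score before differentiating again, while the paper keeps the combined $\frac{2n_ib+1}{b^2(n_ib+1)^2}$ term), and your algebra checks out.
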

The proof is reported in Appendix~\ref{sec:lem_CRB}.

\subsection{Analysis of the \EBestimator}
For the \EBestimator\, we can analyze the performance only once consensus on the
optimal value $\bML$, and thus on $\lambdaEB_j$, has been reached. Specifically,
let us recall that
\begin{equation}
\label{eq:lambdaEB_0}
  \lambdaEB_j = \frac{\bML}{\bML n_j+1} (a + \sigma_j).
\end{equation}

Despite no analytical expression is available for the ML estimator $\bML$ (nor
for its moments) the asymptotic analysis ($N\rightarrow\infty$) of the
corresponding MMSE estimator $\lambdaEB_j$ can be obtained by using the
properties of ML estimation.
In particular, any ML estimator is asymptotically unbiased and efficient,
hence $\E[\bML]\rightarrow b$ and $\VAR[\bML]\rightarrow \mathrm{CRB}(b)$ as $N\rightarrow\infty$.
From equation \eqref{eq:CRB}, 
\begin{align}
  \mathrm{CRB}(b) &= \frac{b/a}{\sum_{i=1}^N
    \frac{n_i}{n_ib+1}}\nonumber\\ 
&\leq \frac{b/a}{\sum_{i=1}^N
    \frac{1}{\nmax b+1}} = \frac{b/a}{N
    \frac{1}{\nmax b+1}} \stackrel{N\rightarrow\infty}{
    \xrightarrow{\hspace*{1cm}}} 0 \,,  
\label{eq:CRB_asympt}
\end{align}
so that $\VAR[\bML]\rightarrow 0$ as $N\rightarrow\infty$.

These results on the asymptotic properties of $\bML$ can be used to prove the following theorem
characterizing the asymptotic behavior of the \EBestimator\,.

Due to the nonlinear dependency of $\lambdaEB$ from $\bML$, we will perform an
approximate analysis by considering the Taylor expansion of $\lambdaEB$ around
$\E[\bML]$. For tractability we will consider respectively the second-order and
the first-order approximations for mean and variance.
The numerical analysis in Section~\ref{sec:numerical_analysis} will confirm the
validity of such an approximation.
\begin{theorem}
\label{thm:mean_var_EB_estimator}
Consider a network of monitors as in Section~\ref{sec:prob_form} running the
\EBestimator. Then, as $N\rightarrow \infty$, it holds
true
\begin{equation}
\E[\lambdaEB_j|\lambda_j] \longrightarrow
\frac{b}{1 + n_j b}(a+n_j\lambda_j).
\label{eq:E_lambdaEB_lim_N}
\end{equation}
to second order and 
\begin{equation}
  \VAR[\lambdaEB_j| \lambda_j] \longrightarrow \left( \frac{b}{1+n_jb}
  \right)^2 n_j\lambda_j,
\label{eq:VAR_lambdaEB_lim_N}
\end{equation} 
to first order.
\end{theorem}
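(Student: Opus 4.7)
The plan is to exploit the independence between $\bML$ and $\sigma_j$ guaranteed by the ``special agent'' construction (agent $j$ does not contribute to the computation of $\bML$), combined with the standard asymptotic properties of ML estimators stated just before the theorem: as $N\to\infty$, $\E[\bML]\to b$ and, by the CRB calculation in \eqref{eq:CRB_asympt}, $\VAR[\bML]\to 0$. Writing $g(b) \eqdef \frac{b}{n_j b + 1}$ so that $\lambdaEB_j = g(\bML)(a+\sigma_j)$, conditioning on $\lambda_j$ makes $\sigma_j\sim\text{Poisson}(n_j\lambda_j)$, hence $\E[\sigma_j|\lambda_j]=\VAR[\sigma_j|\lambda_j]=n_j\lambda_j$, while $\bML$ remains independent of $\sigma_j$.

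For the conditional mean, independence yields
\[
\E[\lambdaEB_j|\lambda_j] = \E[g(\bML)]\,(a+n_j\lambda_j).
\]
I would then apply the second-order Taylor expansion of $g$ around $\E[\bML]$,
\[
\E[g(\bML)] \approx g(\E[\bML]) + \tfrac{1}{2}g''(\E[\bML])\,\VAR[\bML],
\]
and let $N\to\infty$: $\E[\bML]\to b$ implies $g(\E[\bML])\to g(b)=\frac{b}{n_j b+1}$, while $\VAR[\bML]\to 0$ kills the correction term. This gives \eqref{eq:E_lambdaEB_lim_N}.

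For the conditional variance, I would again use independence of $g(\bML)$ and $a+\sigma_j$ to write
\[
\VAR[\lambdaEB_j|\lambda_j] = \E[g(\bML)^2]\,\E[(a+\sigma_j)^2|\lambda_j] - \E[g(\bML)]^2(a+n_j\lambda_j)^2,
\]
and expand into $\E[g(\bML)]^2\,\VAR[\sigma_j|\lambda_j] + \VAR[g(\bML)]\,(a+n_j\lambda_j)^2 + \VAR[g(\bML)]\,\VAR[\sigma_j|\lambda_j]$. A first-order delta-method step gives $\VAR[g(\bML)]\approx g'(\E[\bML])^2\,\VAR[\bML]\to 0$ as $N\to\infty$, so the last two summands vanish and only the first survives, yielding $\bigl(\tfrac{b}{1+n_j b}\bigr)^2 n_j\lambda_j$ as in \eqref{eq:VAR_lambdaEB_lim_N}.

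The only delicate point is that the argument relies on truncating the Taylor expansion (second-order for the mean, first-order for the variance) rather than on an exact limit, because no closed form for the moments of $\bML$ is available. This is the ``approximate analysis'' explicitly flagged in the paragraph preceding the theorem, and the validity of the approximation is deferred to the Monte Carlo section; I would state it as such, justifying the truncation by noting that the omitted terms carry factors of $\VAR[\bML]^k$ with $k\ge 1$ (for the mean) or $k\ge 2$ (for the variance) and therefore vanish faster than the retained ones as $N\to\infty$, since $\VAR[\bML]\to 0$ at rate $O(1/N)$ by \eqref{eq:CRB_asympt}.
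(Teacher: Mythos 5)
Your proposal follows essentially the same route as the paper's proof: the factorization $\lambdaEB_j = Z(\bML)(a+\sigma_j)$ with independence of $\bML$ and $\sigma_j$, the conditional Poisson moments of $\sigma_j$, the second-order Taylor expansion of $Z$ around $\E[\bML]$ for the mean and a first-order (delta-method) term for the variance, and finally $\E[\bML]\to b$, $\VAR[\bML]\to 0$ via the CRB bound to take the limit. The only cosmetic difference is that you write the delta-method term as $\bigl(Z'(\E[\bML])\bigr)^2\VAR[\bML]$ (the standard form), whereas the paper's displayed approximation uses $Z''$ there; since that term vanishes as $N\to\infty$ in either case, the conclusion is unaffected.
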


\subsection{Analysis of the \AHestimator}
The closed-form update of the \AHestimator\, allows us to perform a more
detailed analysis. In particular, we are able to characterize mean and variance
of the local estimator $\lambdaAH_j(t)$ during the algorithm evolution
(transient analysis) and for any fixed value of the number of nodes $N$.

As we have done for the \EBestimator, also for the ad-hoc one we first
characterize the estimator of the hyperparameter $b$.

Before addressing the transient analysis, we compute mean and variance of the
consensus value $\bhom$.
\begin{proposition}
  The homogeneous estimator of $b$, $\bhom$, is unbiased, i.e., 
\[
\E[\bhom]=b,
\] 
and has variance
\begin{equation}
  \VAR[\bhom] = \frac{b}{a n} + \frac{b^2}{a n^2} \sum_{i=1}^{N} n_i^2 \,.
  \label{eq:VAR_bhom}
\end{equation}
Moreover, the estimator $\bhom$ is consistent, i.e., it converges in probability to the
true value $b$ as $N\rightarrow \infty$.
\label{prop:VAR_bhom}
\end{proposition}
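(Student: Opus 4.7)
The plan is to compute the first two moments of $\bhom=\sigma/(an)$ directly using the hierarchical Gamma-Poisson model, and then deduce consistency from the fact that the variance vanishes as $N\to\infty$.

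First, I would show unbiasedness. Since $\bhom = \frac{1}{an}\sum_{i=1}^N \sum_{\ell=1}^{n_i} y_{i,\ell}$, the tower property yields $\E[y_{i,\ell}] = \E[\E[y_{i,\ell}\mid\lambda_i]] = \E[\lambda_i] = ab$, where the last equality uses that $\lambda_i\sim\text{Gamma}(a,b)$ has mean $ab$. Summing over all $n$ measurements gives $\E[\sigma]=nab$, so $\E[\bhom]=b$.

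For the variance, I would exploit independence across agents (the $\lambda_i$'s are i.i.d.\ and, given the $\lambda_i$'s, the $y_{i,\ell}$'s are independent Poissons), so that $\VAR[\sigma]=\sum_{i=1}^N \VAR[\sigma_i]$. For each $i$, the law of total variance gives
\begin{equation*}
\VAR[\sigma_i] = \E[\VAR[\sigma_i\mid \lambda_i]] + \VAR[\E[\sigma_i\mid\lambda_i]] = \E[n_i\lambda_i] + \VAR[n_i\lambda_i] = n_i ab + n_i^2 ab^2,
\end{equation*}
using that $\sigma_i\mid\lambda_i \sim \text{Poisson}(n_i\lambda_i)$ (sum of $n_i$ i.i.d.\ Poissons) together with $\VAR[\lambda_i]=ab^2$. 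Summing and dividing by $(an)^2$ reproduces the stated expression for $\VAR[\bhom]$.

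Finally, for consistency I would show that $\VAR[\bhom]\to 0$ as $N\to\infty$, since combined with unbiasedness this yields $L^2$ (and hence in-probability) convergence to $b$. The first term $b/(an)$ clearly vanishes because $n\geq N$. For the second term, the bound $n_i\leq \nmax$ gives
\begin{equation*}
\frac{b^2}{an^2}\sum_{i=1}^N n_i^2 \;\leq\; \frac{b^2 \nmax}{an^2}\sum_{i=1}^N n_i \;=\; \frac{b^2\nmax}{an} \;\longrightarrow\; 0.
\end{equation*}

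I do not anticipate any serious obstacle: the model is conjugate and independent across nodes, so all the computations reduce to elementary moment identities for the Gamma and Poisson distributions. The only mild care point is making explicit the independence structure across $i$ (to justify variance additivity) and the use of $n_i\leq\nmax$ to kill the heterogeneous-sample-size term in the consistency step.
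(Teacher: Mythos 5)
Your proposal is correct and follows essentially the same route as the paper's proof: unbiasedness via the tower property ($\E[\sigma]=nab$), the variance via the law of total variance with the Gamma moments $\E[\lambda_i]=ab$, $\VAR[\lambda_i]=ab^2$ (you apply it per node and sum using independence, the paper conditions on the whole vector $\bm{\lambda}$ at once — an immaterial difference), and consistency from $\VAR[\bhom]\rightarrow 0$ using $n\geq N$ and $n_i\leq \nmax$, which is the same Chebyshev-type argument the paper states via Markov's inequality. No gaps.
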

The proof is given in Appendix~\ref{sec:prop_VAR_bhom}.

\begin{remark}
  The estimator $\bhom$ is the ML estimator in the homogenous case, i.e., when
  $n_i = n/N$ for all $i\in\until{N}$, and it attains the Cramer-Rao Bound (CRB)
  not only asymptotically, but for any $N$.
  It follows by substituting $n_i = n/N$ in \eqref{eq:CRB} and
  \eqref{eq:VAR_bhom}. 
  Interestingly, we will show in the numerical analysis that even in the
  non-homogeneous scenario the ML estimator $\bML$ approaches the CRB at any
  fixed $N$. 
\end{remark}

The following lemma gives a characterization of the transient local estimates.
\begin{lemma}
  \label{lem:E_VAR_bhom_t}
  Let Assumption~\ref{assum:graph} and Assumption~\ref{assum:col_stochastic}
  hold. For all $i\in\until{N}$, the local estimator $\bhom_i(t)$ used in the
  \AHestimator\, (Algorithm~\ref{alg:ad-hoc_estimator}) is unbiased at any
  $t\in\naturalzero$, i.e.,
  \[
  \E[\bhom_i(t)]=b,
  \] 
  and has variance
  \begin{equation}
      \label{eq:VAR_bhom_t}
       \begin{split}
         \VAR[\bhom_i(t)] = \frac{b}{a} \frac{\sum_{k=1}^N \phi_{ik}(t)^2
           n_k}{{\left(\sum_{k=1}^N \phi_{ik}(t) n_k\right)^2}} + \frac{b^2}{a}
         \frac{ \sum_{k=1}^N \phi_{ik}(t)^2 n_k^2}{\left(\sum_{k=1}^N
             \phi_{ik}(t) n_k\right)^2}
       \end{split}
     \end{equation}
     where $\phi_{ik}(t)$ is the element $(i,k)$ of the state transition matrix
     $\Phi(t)$ defined in \eqref{eq:state_trans_matrix}.  

     Moreover, as $t\rightarrow\infty$, the variance of $\bhom_i(t)$ converges
     exponentially to the variance of $\bhom$, \eqref{eq:VAR_bhom},
     and 
     satisfies
     \begin{equation}
       |\VAR[\bhom_i(t)]-\VAR[\bhom]| \leq \frac{b(1+2b\nmax)}{\mu a}
       \delta(\Phi(t)), 
       \label{eq:bound_VAR_bhom_t}
     \end{equation}
     where $\mu>0$ is given in Lemma~\ref{lem:weak_ergodicity} and
     $\delta(\Phi(t))$ is a (proper) coefficient of ergodicity\footnote{A
       coefficient of ergodicity is a function $\tau(\cdot)$ continuous on the
       set of raw (respectively column) stochastic matrices satisfying $0\leq
       \tau(A) \leq 1$. It is proper if $\tau(A) = 0$ if and only if $A =
       \textbf{1} v^T$, with $\textbf{1} = [1 \; \ldots 1]^T$ and $v$ a
       stochastic vector, \cite{seneta2006non,vaidya2011distributed}.}
     (exponentially decaying with time) defined as $\displaystyle\delta(A) \eqdef
     \max_{k}\max_{i_1,i_1} |a_{i_1,k} - a_{i_2,k} |$.
\end{lemma}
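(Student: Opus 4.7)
The plan is to leverage the linearity of the push-sum dynamics \eqref{eq:s_eta_dyn}--\eqref{eq:s_eta_evol} together with the probabilistic structure of the measurements. Writing $\bhom_i(t) = s_i(t)/(a\,\eta_i(t))$ with $s_i(t) = \sum_k \phi_{ik}(t)\,\sigma_k$ and $\eta_i(t) = \sum_k \phi_{ik}(t)\,n_k$, the crucial observation is that the $n_k$'s are deterministic, so $\eta_i(t)$ is deterministic and all randomness of $\bhom_i(t)$ lives in the numerator $s_i(t)$. From the hierarchical Gamma--Poisson model the $\sigma_k$'s are mutually independent, with $\E[\sigma_k] = a\,b\,n_k$ and, by the law of total variance applied to $\sigma_k|\lambda_k \sim \mathrm{Poisson}(n_k\lambda_k)$ with $\lambda_k \sim \mathrm{Gamma}(a,b)$, $\VAR[\sigma_k] = a\,b\,n_k + a\,b^2\,n_k^2$.

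With these ingredients, unbiasedness is immediate by linearity: $\E[s_i(t)] = a\,b\,\eta_i(t)$, hence $\E[\bhom_i(t)] = b$ for every $i$ and every $t$, regardless of the graph sequence. The variance formula \eqref{eq:VAR_bhom_t} follows by independence of the $\sigma_k$'s, namely $\VAR[s_i(t)] = \sum_k \phi_{ik}(t)^2 \VAR[\sigma_k]$; substituting the explicit expression for $\VAR[\sigma_k]$ and dividing by $a^2 \eta_i(t)^2$ reproduces \eqref{eq:VAR_bhom_t} term by term.

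The core step is the ergodicity-type bound \eqref{eq:bound_VAR_bhom_t}. I would parameterize $\phi_{ik}(t) = \eta_i(t)/n + r_k$, where weak ergodicity (Lemma~\ref{lem:weak_ergodicity}(i)) gives $|r_k| \leq \delta(\Phi(t))$ uniformly in $k$ and, by construction, $\sum_k r_k\,n_k = 0$. Substituting into $\sum_k \phi_{ik}(t)^2 n_k$, the linear-in-$r_k$ cross-term vanishes thanks to $\sum r_k n_k = 0$, leaving $\eta_i(t)^2/n$ at zeroth order plus a purely quadratic residual $\sum_k r_k^2 n_k$. The analogous expansion of $\sum_k \phi_{ik}(t)^2 n_k^2$ produces $\eta_i(t)^2 \sum_k n_k^2/n^2$ at zeroth order but now a non-vanishing linear cross-term $(2\eta_i(t)/n)\sum_k r_k n_k^2$, which contributes $O(\nmax\,\delta(\Phi(t)))$ after dividing by $\eta_i(t)^2$ and using $|\sum_k r_k n_k^2| \leq \nmax\,\delta(\Phi(t))\,n$. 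The lower bound $\eta_i(t) \geq \mu \min_k n_k \geq \mu$ from Lemma~\ref{lem:weak_ergodicity}(ii) controls the denominators; combining the two residuals and collecting the prefactors $b/a$ and $b^2/a$ from \eqref{eq:VAR_bhom_t} yields the constant $b(1 + 2b\nmax)/(\mu\,a)$. Exponential decay of $\delta(\Phi(t))$ under Assumptions~\ref{assum:graph}--\ref{assum:col_stochastic} is a standard fact for products of column-stochastic matrices under uniform joint strong connectivity (as in the references cited after Lemma~\ref{lem:weak_ergodicity}), and transfers directly to \eqref{eq:bound_VAR_bhom_t}.

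The main obstacle is the constant tracking in the residual terms: the cancellation from $\sum_k r_k n_k = 0$ must be exploited carefully in the first sum so that only the $b/(\mu\,a)$ piece appears, while the second sum's non-vanishing cross-term must be bounded uniformly in $N$ to match the $2b^2\nmax/(\mu\,a)$ piece; the quadratic-in-$r_k$ remainders contribute higher-order ($O(\delta(\Phi(t))^2)$) corrections that are dominated by the linear ones once $\delta(\Phi(t)) \leq 1$. The mean and variance formulas themselves are routine linear-algebra calculations, and the exponential decay of $\delta(\Phi(t))$ is invoked as a black box from consensus theory; the technical heart is the parameterization and residual bookkeeping in the ergodicity bound.
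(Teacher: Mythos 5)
Your derivations of the unbiasedness and of the transient variance \eqref{eq:VAR_bhom_t} are correct and essentially the paper's: the paper applies the law of total variance to the aggregate $\sum_k\phi_{ik}(t)\sigma_k$ conditioning on $\lambdavec$, while you apply it per node to get $\VAR[\sigma_k]=abn_k+ab^2n_k^2$ and then use independence of the $\sigma_k$'s — the same computation in a different order. For the bound \eqref{eq:bound_VAR_bhom_t} your route is organized differently from the paper's: the paper expands $\left(\sum_k\phi_{ik}(t)n_k\right)^2$ into double and triple sums and bounds pairwise differences $|\phi_{ik}(t)-\phi_{ih}(t)|$ by $\delta(\Phi(t))$ via the triangle inequality, whereas you center the row at its weighted mean, $\phi_{ik}(t)=\eta_i(t)/n+r_k$ with $\sum_k r_k n_k=0$ and $|r_k|\le\delta(\Phi(t))$ (legitimate, since each $r_k$ is a convex combination of pairwise differences). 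This centering can indeed reproduce the paper's exact constant, so the architecture is sound.

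However, as written your residual bookkeeping has a gap. The inequality \eqref{eq:bound_VAR_bhom_t} is claimed for every finite $t$, so the quadratic-in-$r_k$ remainders cannot be dismissed as ``higher order, dominated once $\delta(\Phi(t))\le1$'': the crude bound $\sum_k r_k^2 n_k\le\delta(\Phi(t))^2 n$, after dividing by $\eta_i(t)^2$, gives a term of order $\delta(\Phi(t))^2 n/\mu^2$, which is not below $\delta(\Phi(t))/\mu$ for small $t$ and large $n$. Worse, for the first sum the linear cross-term vanishes, so the quadratic residual is the \emph{entire} deviation — it is exactly the source of the $b/(\mu a)$ piece of the constant you yourself attribute to it, so it cannot be dropped. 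The fix stays inside your framework: use the cancellation once more, $\sum_k r_k^2 n_k=\sum_k r_k\phi_{ik}(t)n_k\le\delta(\Phi(t))\,\eta_i(t)$, and for the second sum bound the full deviation (linear plus quadratic at once) via the factorization $\phi_{ik}(t)^2-(\eta_i(t)/n)^2=r_k\bigl(\phi_{ik}(t)+\eta_i(t)/n\bigr)$, which gives $\bigl|\sum_k\bigl(\phi_{ik}(t)^2-(\eta_i(t)/n)^2\bigr)n_k^2\bigr|\le \nmax\,\delta(\Phi(t))\sum_k\bigl(\phi_{ik}(t)+\eta_i(t)/n\bigr)n_k=2\,\nmax\,\delta(\Phi(t))\,\eta_i(t)$. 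Dividing by $\eta_i(t)^2$, using $\eta_i(t)\ge\mu$ (valid since $n_k\ge1$) and the prefactors $b/a$ and $b^2/a$ then yields exactly $\frac{b(1+2b\nmax)}{\mu a}\delta(\Phi(t))$ with no leftover terms; the exponential decay of $\delta(\Phi(t))$, invoked as in the paper, completes the convergence claim.
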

The proof is reported in Appendix~\ref{sec:lem_E_VAR_bhom_t}.

\begin{remark}
  It is worth noting that $\mu$ and $\delta(\Phi(t))$ in
  equation~\eqref{eq:bound_VAR_bhom_t} typically depend on $N$ (and thus also on
  $n$). The interesting aspect of the given result is that the bound provided in
  the previous lemma relates the convergence rate of $\VAR[\bhom]$ to parameters
  of the communication graph and the diffusion protocol as $\mu$ and
  $\delta(\Phi(t))$.  For example, if $\union_{\tau=tQ}^{(t+1)Q-1} G(\tau)$ is
  balanced, the matrix $\Phi(t)$ is doubly stochastic, hence $\mu=1$.
\end{remark}

\bigskip

With this characterization of the hyperparameter estimator, we are ready to
analyze the estimator $\lambdaAH_j(t)$.
Recalling the assumption that the measurements of agent $j$ do not contribute to
the computation of $\bhom_j(t)$, we clearly have\footnote{As stated before,
  agent $j$ uses as $\bhom_j(t)$ the estimate $\bhom_i(t)$ of some neighbor,
  $i\in N_j^I\subset\until{N}$ participating to the distributed computation.}
\begin{align}
\label{eq:cond_mean_approx}
  \E[\bhom_j(t)| \lambda_j] = \E[\bhom_j(t)] = b.
\end{align}
and
\begin{align*}
  \VAR[\bhom_j(t)|\lambda_j] =   \VAR[\bhom_j(t)]. 
\end{align*}

\begin{remark}
  It is worth noticing once more that although for a tractable, rigorous
  analysis we need the assumption that agent $j$ does not contribute to the
  distributed computation, in practice the results hold with good approximation
  also in the scenario in which agent $j$ contributes to the distributed
  computation. This is due to the weak impact of single measurements onto the
  aggregate quantities. In fact, for example, following analogous calculations
  as in the proof of Lemma~\ref{lem:E_VAR_bhom_t}, the conditional mean in this
  latter case turns out to be
\begin{equation}
\label{eq:cond_mean_exact}
   \E[\bhom_j(t)| \lambda_j] = b - \frac{\phi_{jj}(t) n_j}{\eta_j(t)} \left(b
     - \frac{\lambda_j}{a}\right),
 \end{equation}
 where $\phi_{jj}(t)$ is, again, the element $(j,j)$ of $\Phi(t)$ and we have
also used that, conditioned to the $\lambda_j$ of agent $j$,
\begin{equation*}
\E[y_{i,\ell}|\lambda_j] =\left\{ \begin{array}{ll} \lambda_j & i=j\\ ab
      & i\neq j\end{array} \right. .
\end{equation*}
Since as $t\rightarrow \infty$, $\frac{\phi_{jj}(t) n_j}{\eta_j(t)}
\rightarrow \frac{n_j}{n} \ll 1$, then the difference between equations
\eqref{eq:cond_mean_exact} and \eqref{eq:cond_mean_approx} is practically
negligible.
\end{remark}

In the next theorem we provide explicit transient expressions for the
conditional mean and variance of $\lambdaAH_j(t)$. Again, we use second-order
and first-order approximations respectively.
\begin{theorem}
\label{thm:mean_var_AD-HOC_estimator}
  Consider a network of monitors as in Section~\ref{sec:prob_form} running the
  \AHestimator\, (Algorithm~\ref{alg:ad-hoc_estimator}). Then the transient
  conditional mean and variance of $\lambdaAH_j(t)$ are given by
\begin{align}
\;\;\;  \E[&\lambdaAH_j(t)|\lambda_j] \approx \nonumber\\
&(a+n_j\lambda_j)\left[ \frac{b}{1+n_j b}
    - \frac{n_j}{(1+n_j b)^3} \VAR[\bhom_j(t)]
  \right] 
\label{eq:E_cond_approx_AH}
\end{align}
and
\begin{align}
\VAR[&\lambdaAH_j(t)|\lambda_j] \approx \nonumber\\ 
& \left[ \frac{b}{1+n_jb} -
  \!\frac{n_j}{(1+n_jb)^3} \VAR[\bhom_j(t)] \right]^2 n_j\lambda_j \nonumber\\
 & \;\; +\left[\frac{2n_j}{(1+n_jb)^3}\right]^2 \VAR[\bhom_j(t)]\left[
   (a+n_j\lambda_j)^2+n_j\lambda_j\right]
\label{eq:VAR_cond_approx_AH}
\end{align}
where $\approx$ indicates respectively the second-order and the first-order
approximations,
and $\VAR[\bhom_j(t)]$ is given in \eqref{eq:VAR_bhom_t}.\\[-0.8em]

Moreover, the asymptotic value $\lambdaAH_j$ satisfies, as
$N\rightarrow \infty$,
\begin{align}
  \E[\lambdaAH_j|\lambda_j] & \longrightarrow
  \frac{b}{1 + n_j b}(a+n_j\lambda_j)
\label{eq:E_cond_approx_AH_asympt}
\end{align}
to second order and
\begin{align}
\VAR[\lambdaAH_j|\lambda_j] & \longrightarrow 
\left(\frac{b}{1+n_j b} \right)^2 n_j\lambda_j
\label{eq:VAR_cond_approx_AH_asympt}
\end{align}
to first order.
\end{theorem}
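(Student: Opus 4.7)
The plan is to exploit the independence granted by the ``special-agent'' assumption---namely that $\bhom_j(t)$ does not depend on agent $j$'s measurements, so $\bhom_j(t)$ is independent of $\sigma_j$ (and of $\lambda_j$). Writing $\lambdaAH_j(t) = g(\bhom_j(t))\,(a+\sigma_j)$ with $g(x) \eqdef x/(n_jx+1)$, the analysis reduces to computing the conditional moments of the two independent factors and then combining them through a delta-method expansion of the nonlinear map $g$ around $\E[\bhom_j(t)]=b$.

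\textbf{Step 1 (product moments).} Since $\sigma_j\mid\lambda_j\sim\text{Poisson}(n_j\lambda_j)$, I would record $\E[a+\sigma_j\mid\lambda_j]=a+n_j\lambda_j$, $\VAR[\sigma_j\mid\lambda_j]=n_j\lambda_j$, and $\E[(a+\sigma_j)^2\mid\lambda_j]=(a+n_j\lambda_j)^2+n_j\lambda_j$. The identity $\VAR[UV]=\VAR[U]\,\E[V^2]+(\E[U])^2\VAR[V]$ for independent $U,V$ then yields
\begin{align*}
\E[\lambdaAH_j(t)\mid\lambda_j] &= (a+n_j\lambda_j)\,\E[g(\bhom_j(t))],\\
\VAR[\lambdaAH_j(t)\mid\lambda_j] &= \VAR[g(\bhom_j(t))]\bigl[(a+n_j\lambda_j)^2+n_j\lambda_j\bigr]\\
&\quad + (\E[g(\bhom_j(t))])^2\,n_j\lambda_j.
\end{align*}

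\textbf{Step 2 (delta method and asymptotics).} Using $\E[\bhom_j(t)]=b$ from Lemma~\ref{lem:E_VAR_bhom_t}, together with $g'(b)=(1+n_jb)^{-2}$ and $g''(b)=-2n_j(1+n_jb)^{-3}$, a second-order Taylor expansion gives $\E[g(\bhom_j(t))]\approx b/(1+n_jb) - n_j(1+n_jb)^{-3}\VAR[\bhom_j(t)]$, and the first-order delta method supplies the required approximation of $\VAR[g(\bhom_j(t))]$ in terms of $\VAR[\bhom_j(t)]$. Substituting these into the product identities and inserting the closed form \eqref{eq:VAR_bhom_t} of $\VAR[\bhom_j(t)]$ produces \eqref{eq:E_cond_approx_AH}--\eqref{eq:VAR_cond_approx_AH}. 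For the asymptotic statements, Proposition~\ref{prop:VAR_bhom} together with $n_i\in[1,\nmax]$ implies $\VAR[\bhom]=O(1/n)\to0$ as $N\to\infty$, and the exponential bound \eqref{eq:bound_VAR_bhom_t} transfers the decay to $\VAR[\bhom_j(t)]$ at any fixed $t$; the $\VAR[\bhom_j(t)]$-corrections in \eqref{eq:E_cond_approx_AH}--\eqref{eq:VAR_cond_approx_AH} therefore vanish, leaving precisely \eqref{eq:E_cond_approx_AH_asympt}--\eqref{eq:VAR_cond_approx_AH_asympt}.

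\textbf{Main obstacle.} The delta method is heuristic at this level: a fully rigorous bound would require controlling the higher-order Taylor remainders, for which one would need estimates on $\E[(\bhom_j(t)-b)^k]$ for $k\geq 3$---moments that Lemma~\ref{lem:E_VAR_bhom_t} does not provide. This is precisely why the transient formulas are stated with the symbol $\approx$, and the paper corroborates the quality of the approximation by Monte~Carlo simulation in Section~\ref{sec:numerical_analysis}. A secondary subtle point is the idealization that agent $j$ abstains from the distributed computation; the remark around \eqref{eq:cond_mean_exact} argues the induced bias is $O(n_j/n)$ and hence negligible for large $N$, so the analysis reflects the practical behavior of the algorithm.
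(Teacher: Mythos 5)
Your proposal is correct and follows essentially the same route as the paper's own proof: factor $\lambdaAH_j(t)$ into the independent pieces $g(\bhom_j(t))$ and $a+\sigma_j$, apply the exact conditional product-moment identities, expand $g$ to second order (mean) and first order (variance) around $\E[\bhom_j(t)]=b$ via Lemma~\ref{lem:E_VAR_bhom_t}, and obtain the asymptotics by letting $\VAR[\bhom_j(t)]\to 0$ as $N\to\infty$ (Proposition~\ref{prop:VAR_bhom}). One caveat: the standard first-order delta step you invoke gives the coefficient $\left[1/(1+n_jb)^2\right]^2$ for the $\VAR[\bhom_j(t)]$ term, not the $\left[2n_j/(1+n_jb)^3\right]^2$ displayed in \eqref{eq:VAR_cond_approx_AH}; however, the paper's appendix proof performs exactly the same step (it uses $\left(X'(b)\right)^2\VAR[\bhom_j(t)]$), so this mismatch is internal to the paper's statement rather than a gap in your argument, and it is immaterial for the asymptotic claims since the term vanishes as $N\to\infty$.
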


\begin{remark}
  By comparing equations
  \eqref{eq:E_cond_approx_AH_asympt}-\eqref{eq:VAR_cond_approx_AH_asympt} with
  \eqref{eq:E_lambdaEB_lim_N}-\eqref{eq:VAR_lambdaEB_lim_N}, one can notice that
  at steady-state ($t\rightarrow\infty$) and for large number of agents
  ($N\rightarrow\infty$) the mean and variance of the \AHestimator\, approach
  the ones of the optimal Empirical Bayes.  Moreover, the right-hand side of
  \eqref{eq:VAR_cond_approx_AH_asympt} can be rewritten as
  $\frac{\lambda_j}{n_j} \left(\frac{n_j b}{1+n_j b} \right)^2$, which is
  clearly smaller than the variance of the decentralized estimator
  $\VAR[\lambdaDEC_j| \lambda_j] = \frac{\lambda_j}{n_j}$.
\end{remark}

\section{Numerical performance analysis} %
\label{sec:numerical_analysis}
In this section we analyze the performance of the proposed estimators. Starting
from the theoretical characterization developed in the previous section, we
perform a Monte Carlo analysis confirming the theoretical bounds and adding
other insights on the performance of the estimators.

As performance metric we adopt the Root Mean Square Error (RMSE), thus taking
into account both bias and variance of the estimators. We recall that for an estimator
$\hat\omega$ of a parameter $\omega$, the RMSE is defined as
\begin{align}
\label{eq:RMSE_def}
  \RMSE[\hat\omega] &= \sqrt{\E[(\hat\omega - \omega)^2]} \nonumber\\
  &= \sqrt{\VAR[\hat\omega] + \E^2[\hat\omega] - 2\omega\E[\hat\omega] +
    \omega^2 }.
\end{align}
Clearly, if the estimator is unbiased the RMSE coincides with the standard
deviation, i.e., $\RMSE[\hat\omega] = \sqrt{\VAR[\hat\omega]}$. 

The statistical RMSE, $\RMSE[\omega]$, will be compared with the sample value obtained through the
Monte Carlo trials, $\RMSE_{MC}$, computed as
\[
  \RMSE_{MC} = \sqrt{\frac{1}{M} \sum_{m = 1}^M (\hat\omega[m] - \omega)^2},
\]
where $M$ is the number of trials and $\hat\omega[m]$ is the $m$th estimate of
$\omega$. 

In the following we set $M = 5\times 10^4$ and, to generate the random values,
we use a Gamma distribution with parameters $a = 10$ and $b = 1$, which gives
values of $\lambda_i$ in the range $[1,25]$ with $99.98\%$ probability.

In order to challenge the \AHestimator\ we focus on a strongly inhomogeneous
network scenario. That is, we consider a network in which half of the nodes have
the maximum number of measurements in the observation window, $n_i=\nmax$ (we
set $\nmax=50$), and the remaining ones only one measurement, $n_i=1$.

We start by analyzing the transient performance of the \AHestimator. 
Consistently with the theoretical analysis, we first focus on the time evolution
of the RMSE of $\bhom_i(t)$. Notice that, since $\bhom_i(t)$ is an unbiased
estimator it holds $\RMSE[\bhom_i(t)] = \sqrt{\VAR[\bhom_i(t)]}$. We compare the
evolution of the sample RMSE with the theoretical expression obtained from
\eqref{eq:VAR_bhom_t}.
For this analysis we set $N=20$ and consider two possible communication models. In the
first one we consider a fixed, directed and very sparse communication graph
defined as follows: starting from a directed cycle, we add the edges $(3,1)$,
$(3,2)$, $(4,1)$ and $(4,2)$ to unbalance the graph.
In Figure~\ref{fig:b_transient_fixed} we plot the
evolution of $\RMSE[\bhom_i(t)]$ for $4$ nodes, two of them with $\nmax$
measurements and two with one measurement. The theoretical curve predicts very
accurately the sample RMSE obtained by the Monte Carlo trials, as highlighted in
the inset. As expected, the RMSE of the different nodes converges to the
consensus value $\RMSE[\bhom]$ obtained from \eqref{eq:VAR_bhom}.

As a second scenario we challenge the algorithm on a time-varying
topology. Namely, we consider a graph obtained by extracting at each
time-instant an Erd\H{o}s-R\'enyi graph with parameter $0.01$. We choose a small
value, so that at a given instant the graph is disconnected with high
probability, but it turns out to be uniformly jointly connected with $Q=36$.

In Figure~\ref{fig:b_transient_erdos} we again compare the theoretical
evolutions of $\RMSE[\bhom_i(t)]$ with their sample counterparts. We can
highlight two main differences with respect to the previous scenario. The curves
have some constant portions showing that nodes can be isolated for some
time-intervals. However, the convergence is faster compared to the fixed
scenario. This can be explained by the higher density of the union graph in the
time-varying scenario as opposed to the sparsity of the fixed graph. In fact, we
noticed that increasing the Erd\H{o}s-R\'enyi graph parameter increases the
convergence speed.

\begin{figure}
\centering
\includegraphics[width=0.4\textwidth]{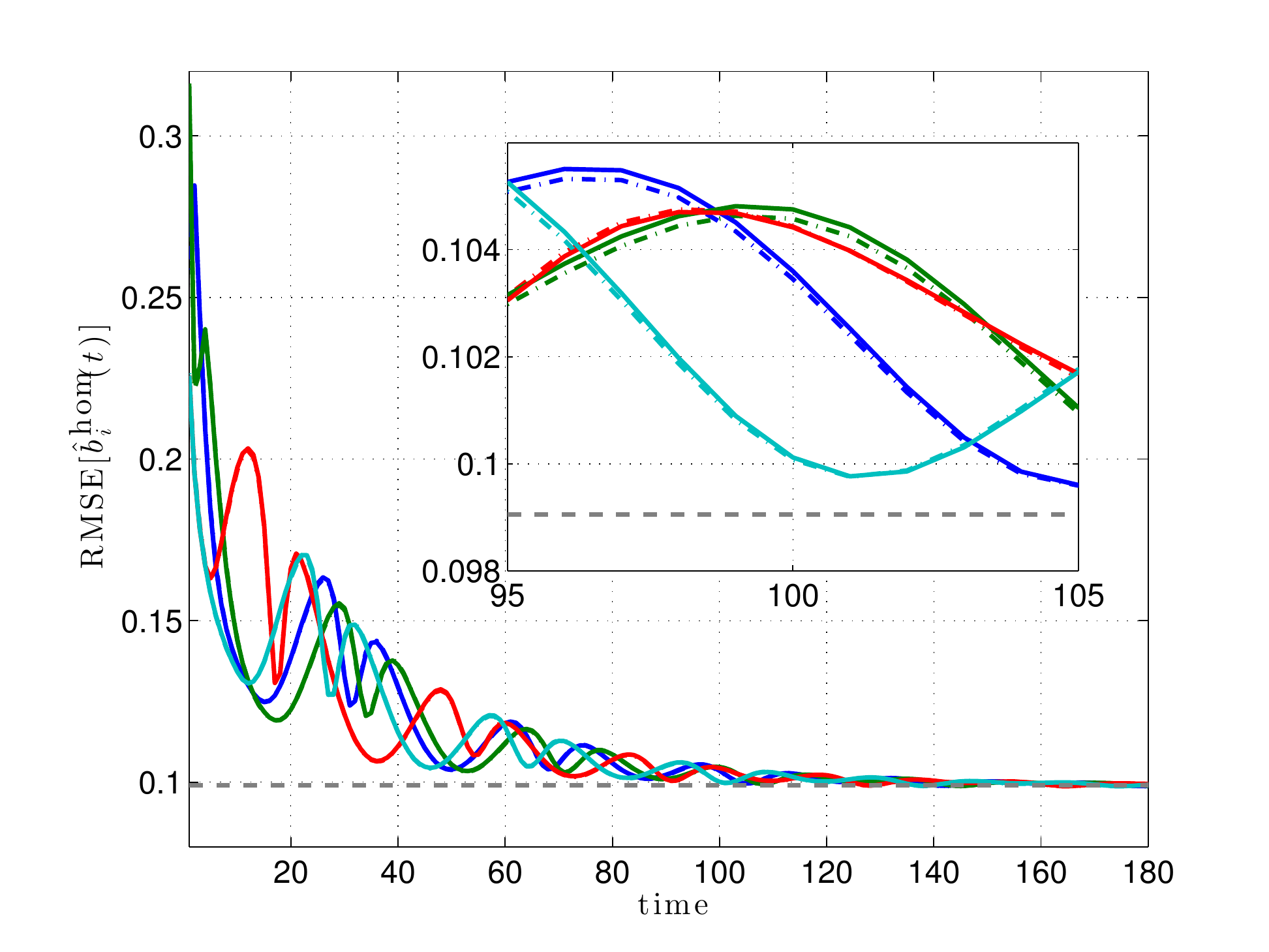}
\caption{Time evolution of $\RMSE[\bhom_i(t)]$ under fixed communication
  graph. The solid lines indicate the theoretical expressions, while the
  dash-dot lines are the ones obtained via Monte Carlo simulations. The dashed
  horizontal line is the theoretical consensus value.}%
\label{fig:b_transient_fixed}
\end{figure}

\begin{figure}
\centering
\includegraphics[width=0.4\textwidth]{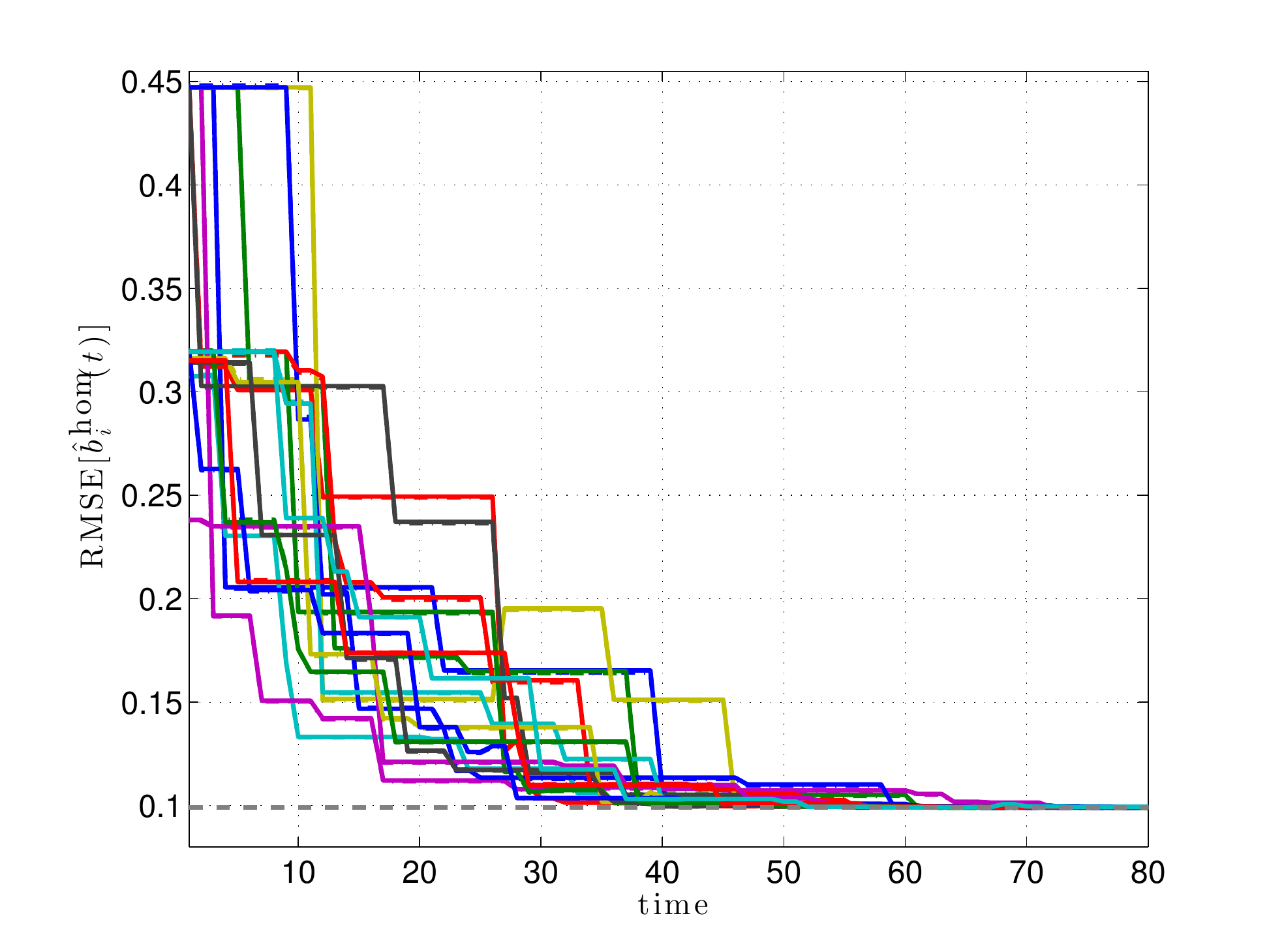}
\caption{Time evolution of $\RMSE[\bhom_i(t)]$ under time-varying communication
  graph. The solid lines indicate the theoretical expressions, while the
  dash-dot lines are the ones obtained via Monte Carlo simulations. The dashed
  horizontal line is the theoretical consensus value.}
\label{fig:b_transient_erdos}
\end{figure}

\begin{figure}
\centering
\includegraphics[width=0.4\textwidth]{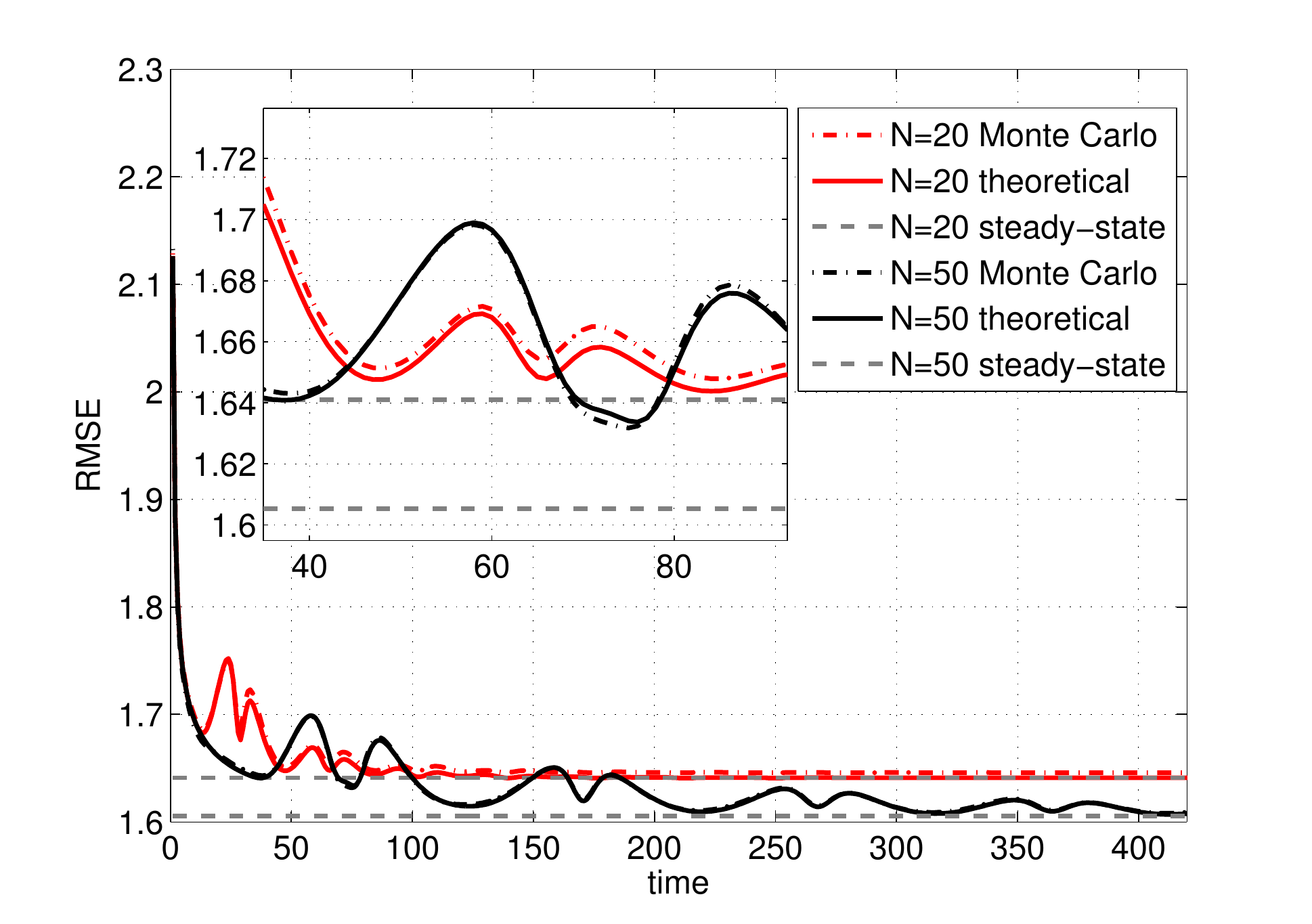}
\caption{Time evolution of $\RMSE[\lambdaAH_i(t)]$ under fixed communication
  graph for $N=20$ (red lines) and $N=50$ (black lines). The solid lines
  indicate the theoretical expressions, while the dash-dot lines are the ones
  obtained via Monte Carlo simulations. The dashed horizontal lines are the
  theoretical steady-state values. }\label{fig:lambda_transient_fixed}
\end{figure}

Now we focus on the transient behavior of $\RMSE[\lambdaAH_i(t)]$ for a generic
node $i$. Notice that we use the index $i$, rather than $j$, meaning that in the
Monte Carlo trials the node under investigation participates to the computation
of $\bhom_i(t)$. This allows us to show that the uncorrelation assumption in
Section~\ref{sec:theoretical_analysis} is in fact reasonable.
Moreover, the computations will also confirm the validity of the low-order
approximation made to derive the theoretical expressions of mean and variance.

In Figure~\ref{fig:lambda_transient_fixed} we compare the theoretical and sample
evolution of $\RMSE[\lambdaAH_i(t)]$. The theoretical curve is obtained by
plugging equations \eqref{eq:E_cond_approx_AH} and \eqref{eq:VAR_cond_approx_AH}
into equation \eqref{eq:RMSE_def}. 
We compute the curves for two different values of $N$, namely $N=20$ and
$N=50$. The difference between the theoretical and sample curves is already
minimal for $N=20$ (showing a very weak correlation between $\bhom_i(t)$ and
$\sigma_i$) and completely disappears for $N=50$ (showing that the
correlation has practically no more influence).
We want to stress that running the same computation for a node not participating
to the distributed computation (hence matching the uncorrelation assumption) the
theoretical and sample curves are indistinguishable even for $N=20$. This
suggests that the low-order approximation does not affect the goodness of the
prediction.

It is worth noting that by increasing $N$ the steady-state value,
$\RMSE[\lambdaAH_i]$, decreases, since the hyperparameter $b$ is estimated by
means of a larger sample. This aspect will be better highlighted in the
following asymptotic analysis in which we focus on how $\RMSE[\lambdaAH_i]$
varies with $N$.

In the asymptotic analysis we consider both the \EBestimator\ and the
\AHestimator\, by comparing again the predicted theoretical values with the
sample counterparts. 
As in the transient analysis we first focus on the estimation of $b$. 

In Figure~\ref{fig:b_asympt_N} we plot the sample RMSE of the two estimators (ML
and homogeneous) and compare them with the theoretical value of the homogeneous
estimator and with the Cramer-Rao Bound (CRB).
As expected, for each fixed $N$ the RMSE of the ML estimator is lower than the
homogeneous one due to the (strong) inhomogeneity of the network. Once again, we
recall that the homogeneous estimator coincides with the ML estimator only when
the network is homogeneous (i.e., $n_i = n/N$ for any $i\in\until{N}$). As
already experienced in the transient analysis, the theoretical values of
$\RMSE[\bhom]$ practically coincide with the sample ones. Although for the ML
estimator we have no theoretical expression for fixed $N$, the picture shows a
very interesting property. That is, the ML estimator achieves the CRB not only
asymptotically ($N\rightarrow\infty$) as predicted by the theory, but also for
each fixed $N$.
Interestingly, in accordance to the theoretical results in the previous section,
also the homogeneous estimator achieves the CRB as $N$ goes to infinity.   

Finally, we analyze the RMSE of the estimators of $\lambda_i$. We consider an
agent with one measurement ($n_i = 1$) and use the most frequent value for the
arrival rate, i.e., the mode of the Gamma distribution, $\lambda_i = (a-1) b$.

In Figure~\ref{fig:lambda_asympt_N} we plot the sample RMSE of the Empirical
Bayes and ad-hoc estimators. We also plot the (sample) values of the
decentralized estimator. We decided to normalize all the curves to the
theoretical value of the decentralized estimator in order to highlight the
improvements of the proposed distributed estimators. Clearly, the sample values
of the decentralized estimator are approximately equal to one, with minor
fluctuations only due to the finite number of samples.
We compare the sample curves with the theoretical curve of the
homogeneous estimator and with the theoretical asymptotic value as
$N\rightarrow\infty$. 
The theoretical curve is obtained as follows: $\E[\lambdaAH_i]$ and
$\VAR[\lambdaAH_i]$ can be computed by plugging $\VAR[\bhom]$ from
\eqref{eq:VAR_bhom} in \eqref{eq:E_cond_approx_AH} and
\eqref{eq:VAR_cond_approx_AH}; then, $\RMSE[\lambdaAH_i]$ is obtained by
plugging $\E[\lambdaAH_i]$ and $\VAR[\lambdaAH_i]$ into
\eqref{eq:RMSE_def}.

Again, although computed under the uncorrelation assumption and neglecting
higher order terms, the theoretical expression $\RMSE[\lambdaAH_i]$ predicts
very accurately the sample values (cross markers and solid curve).

The plot confirms how the distributed estimators take advantage from the network
growth although the local sample remains constant (even $n_i=1$). Indeed, the
RMSE decreases as $N$ grows.
The \EBestimator, being the optimal estimator, always outperforms the
\AHestimator.
However, as predicted by the theoretical analysis, the two estimators achieve
the same asymptotic limit as $N\rightarrow\infty$. Moreover, it is interesting
to notice that the RMSE of the two estimators practically coincide already for
$N = 16$, thus strengthening the already appealing features of the \AHestimator\
found from the theoretical analysis (i.e., easier computation and asymptotic
optimality).

\begin{figure}
\centering
\includegraphics[width=0.4\textwidth]{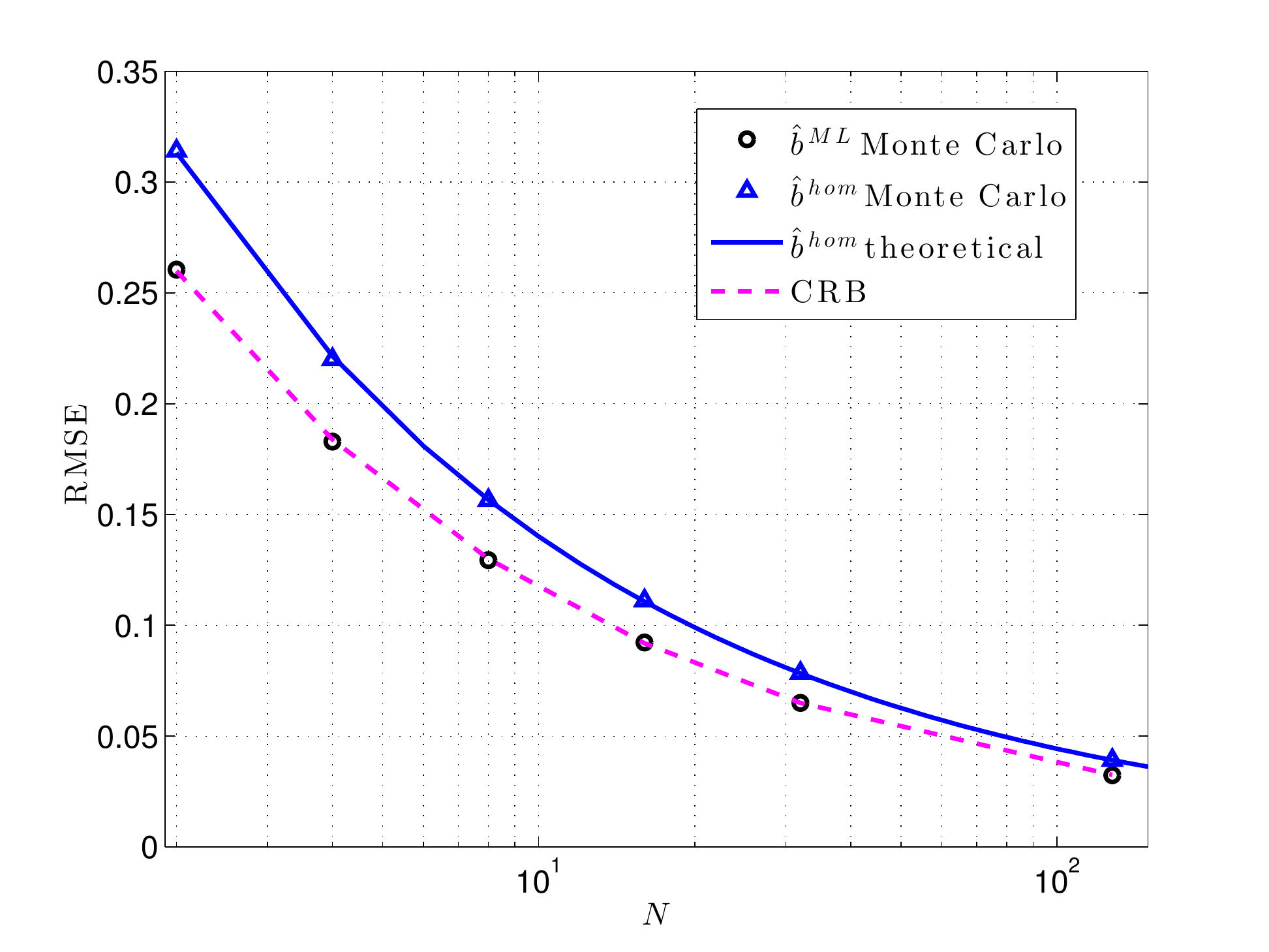}
\caption{Sample RMSE for the homogeneous (triangles markers) and ML (circles
  markers) estimators of $b$ as a function of the number of agents $N$. The
  sample values are compared with the Cramer-Rao Bound (dashed line) and the
  theoretical RMSE of the homogeneous estimator (solid line).}
\label{fig:b_asympt_N}
\end{figure}

\begin{figure}
\centering
\includegraphics[width=0.4\textwidth]{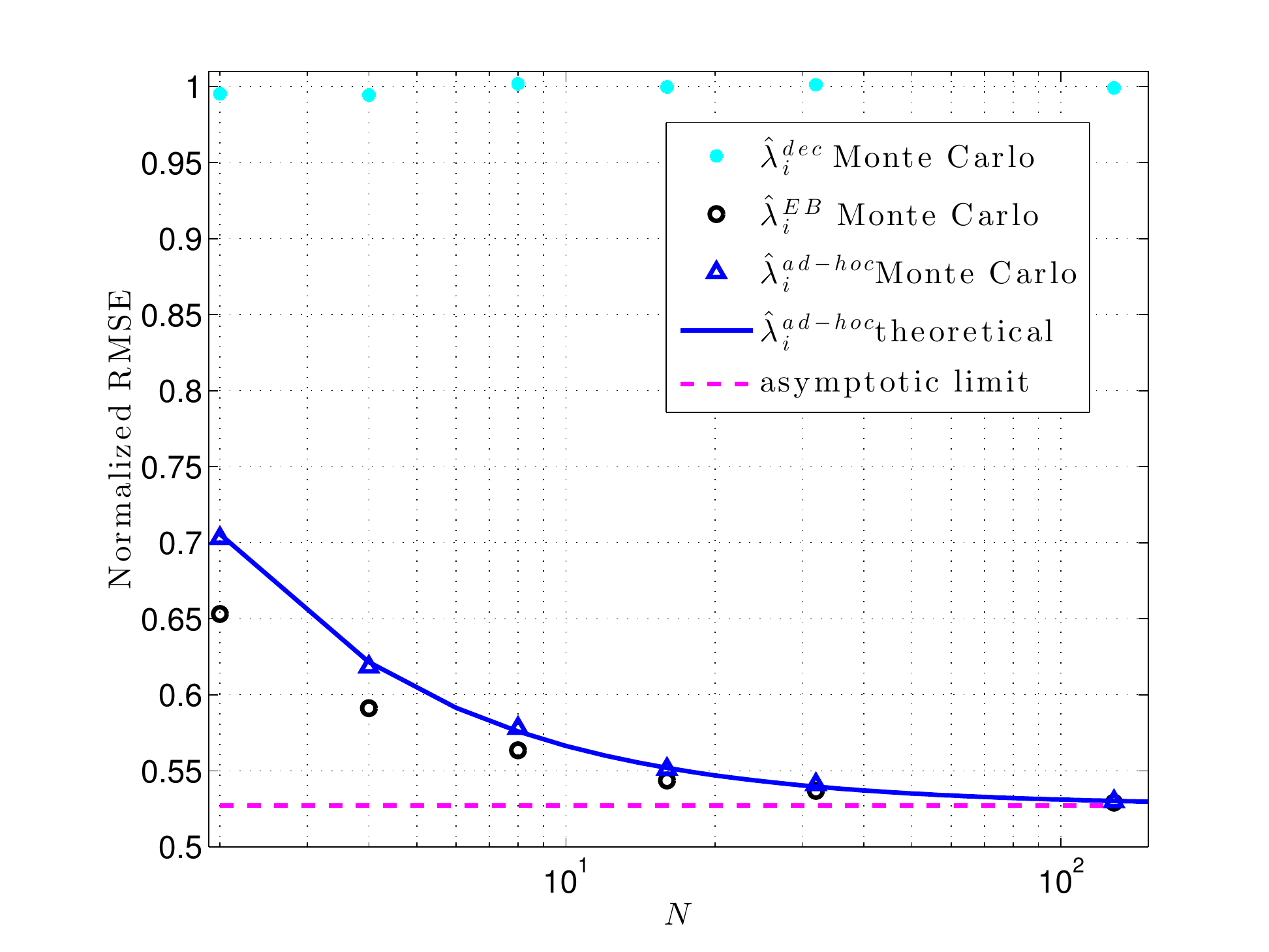}
\caption{Sample RMSE for the ad-hoc (triangle markers) and Empirical Bayes
  (circle markers) estimators normalized with respect to the decentralized
  estimator of $\lambda_i$ (dot markers) as a function of the number of agents
  $N$.  The theoretical RMSE of the ad-hoc estimator (solid line) and the
  theoretical limit (dashed line) are shown for comparison.}%
\label{fig:lambda_asympt_N}
\end{figure}

\section{Conclusions}
In this paper we have proposed a novel distributed scheme, based on a
hierarchical framework, for the Bayesian estimation of arrival rates in
asynchronous monitoring networks. The proposed distributed approach allows each
node to gain information from the network and thus outperforms the decentralized
estimator, especially when the node local information is scarce. In particular,
the distributed estimator consists of the convex combination of a global
information, computed through a distributed optimization algorithm, and a
function of the local data. Then we have proposed an \AHestimator\, that
performs closely to the optimal Empirical Bayes estimator, but is much simpler
to implement and exhibits faster (exponential) convergence. We have analyzed the
two estimators and provided expressions for mean and variance as the network
size goes to infinity, showing that in this asymptotic situation the ad-hoc
estimator achieves the same RMSE of the optimal one. Moreover, for the ad-hoc
estimator we have provided transient expressions for mean and variance. A
numerical Monte Carlo analysis has been performed to corroborate the theoretical
results and highlight the interesting features of the two distributed
estimators.

\appendix

\subsection{Proof of Lemma~\ref{lem:CRB}}
\label{sec:lem_CRB}
  The Cramer-Rao Bound is defined as $\mathrm{CRB}(b) = \frac{1}{I(b)}$, where $I(b)$ is
  the Fisher Information. The Fisher Information is obtained from the likelihood
  \eqref{eq:like} as:
\begin{equation*}
  I(b)=-\E\left[ \frac{\partial^2 \log L(\bm{y}_1, \ldots, \bm{y}_N|b)}{\partial b^2} \right], 
\end{equation*}
and, computing the derivatives, 
\begin{align*}
  I(b) &= -\E\left[ \frac{aN}{b^2} - \sum_{i=1}^N \frac{2n_ib+1}{b^2(n_ib+1)^2}
    (\sigma_i+a)\right]\\ 
  &= -\frac{aN}{b^2} + \sum_{i=1}^N \frac{2n_ib+1}{b^2(n_ib+1)^2} (\E\left[\sigma_i \right]+a).
\end{align*}
Now, the mean of $\sigma_i$ turns out to be $\E[\sigma_i ] = \E[\E[\sigma_i |
\lambda_i]]= \E[n_i \lambda_i] =n_i a b$, so that, after some manipulation
\begin{align*}
  I(b) &= \frac{a}{b^2} \sum_{i=1}^N \left[-1 + \frac{2n_ib+1}{(n_ib+1)^2} (n_i
    b+1)\right] \\
  &= \frac{a}{b^2}\sum_{i=1}^N \frac{n_ib}{n_ib+1},
\end{align*}
so that the proof follows. 

\subsection{Proof of Theorem~\ref{thm:mean_var_EB_estimator}}
The MMSE estimator of $\lambda_j$ in \eqref{eq:lambdaEB_0} can be written
  as $\lambdaEB_j= Z(\bML)Y(\sigma_j)$, where $Z(\bML)\eqdef \frac{\bML}{1+\bML n_j}$
  and $Y(\sigma_j)\eqdef a+\sigma_j$. Due to the independence between $\bML$ and
  $\sigma_j$, we have
\begin{equation}
\E[\lambdaEB_j |\lambda_j]=\E[ZY |\lambda_j]=\E[Z |\lambda_j] \E[Y
  |\lambda_j]  
\label{eq:tot_E_cond_ML}
\end{equation}
and
\begin{align}
 \! \!\!\!\VAR[\lambdaEB_j |\lambda_j]&=\E^2[Z |\lambda_j] \VAR[Y |\lambda_j]\nonumber\\
  &\quad+ \!\VAR[Z |\lambda_j] (\E^2[Y |\lambda_j] +\VAR[Y
  |\lambda_j]). 
  \label{eq:tot_VAR_cond_ML}
\end{align}
The conditional moments of $Y(\sigma_j)$ are easily obtained
\begin{equation}
  \E[Y(\sigma_j)|\lambda_j]=a+n_j \lambda_j
  \label{eq:E_Y}
\end{equation}
and
\begin{equation}
  \VAR[Y(\sigma_j) |\lambda_j]=n_j\lambda_j.
  \label{eq:VAR_Y}
\end{equation}
For the nonlinear function $Z(\bML)$ we resort to an approximate
analysis. That is, we consider the Taylor expansion for the moments of the
function $Z(\bML) \eqdef \frac{\bML}{1+\bML n_j}$ around the mean value
$\E[\bML |\lambda_j] = \E[\bML]$.
\begin{align}
  \!\!\!\E[Z(\bML) | \lambda_j] &= \E[Z(\bML)]  \nonumber \\ 
  & \approx\E \Big[ Z(\E[\bML])+ Z'(\E[\bML]) (\bML-\E[\bML])\nonumber\\ 
  & \qquad + \frac{1}{2} Z''(\E[\bML]) (\bML-\E[\bML])^2\Big], 
\end{align}
where we have neglected terms of order higher than two in the expansion of $Z$,
hence the $\approx$ symbol. Then
\begin{equation}
  \E[Z(\bML) | \lambda_j] \approx Z(\E[\bML])+ \frac{1}{2} Z''(\E[\bML]) \VAR[\bML].
\label{eq:E_Z}
\end{equation}

Observing that $Z'(s) = \frac{1}{(1+n_j s)^2}$ and $Z''(s) = -\frac{2n_j}{(1+n_j
  s)^3}$, and plugging \eqref{eq:E_Y} and \eqref{eq:E_Z} in
\eqref{eq:tot_E_cond_ML}, we get
\begin{align*}
  \E[&\lambdaEB_j| \lambda_j] \approx\\ 
  & (a+n_j\lambda_j)\left[
    \frac{\E[\bML]}{\E[\bML]n_j +1} - \frac{n_j}{(\E[\bML]n_j+1)^3}
    \VAR[\bML] \right]\!. 
\end{align*}
From the asymptotic properties of the ML estimators, we know that for
$N\rightarrow\infty$, $\E[\bML]\rightarrow b$ and $\VAR[\bML]\rightarrow
0$, so that equation \eqref{eq:E_lambdaEB_lim_N} follows.

Similarly, it turns out that
\begin{align}
  \VAR[Z(\bML)| \lambda_j] &\approx \left(Z''(\E[\bML])\right)^2 \VAR[\bML]
 \label{eq:VAR_Z}.
\end{align}
Using \eqref{eq:E_Z}-\eqref{eq:VAR_Z} into eq. \eqref{eq:tot_VAR_cond_ML} we
obtain
\begin{align*}
  &\VAR[\lambdaEB_j| \lambda_j] \approx \left( \frac{\E[\bML]}{\E[\bML]n_j +1} \right)^2 n_j\lambda_j\\
  &\quad+ \left[\frac{2n_j }{(\E[\bML]n_j +1)^3}\right]^2 \VAR[\bML] \left[(a+n_j\lambda_j)^2+n_j\lambda_j\right]\!.
\end{align*}
Using again the asymptotic properties of $\bML$, equation
\eqref{eq:VAR_lambdaEB_lim_N} follows, thus concluding the proof.

\subsection{Proof of Proposition~\ref{prop:VAR_bhom}}
\label{sec:prop_VAR_bhom}
  First, notice that $\E[\sigma_i | \lambda_i] = n_i \lambda_i$, thus 
  \begin{align*}
    \E[ \sigma ] &= \E[\E[\sigma | \bm{\lambda}]] = \E\left[ \sum_{i=1}^N \E[\sigma_i |
    \lambda_i] \right]= \E\left[\sum_{i=1}^N n_i\lambda_i\right]\\ 
&= \sum_{i=1}^N n_i a b = n a b,
\end{align*}
so that $\E[\bhom]=b$.
  
To prove the second part, recall that $\VAR[\sigma_i | \lambda_i] = n_i
\lambda_i$ and, thus, $\VAR[\sigma | \bm{\lambda} ] = \sum_{i=1}^N \VAR[\sigma_i
| \lambda_i] = \sum_{i=1}^N n_i \lambda_i$. Using the law of total variance, we have
  \[
  \begin{split}
    \VAR[ \sigma ] &= \E[\VAR[\sigma |
    \bm{\lambda}]] + \VAR[\E[\sigma | \bm{\lambda}]]\\ 
&= \E\left[\sum_{i=1}^N
    \VAR[\sigma_i | \lambda_i ] \right] + \VAR\left[ \sum_{i=1}^N \E[\sigma_i | \lambda_i ]\right]\\ 
    &= \E\left[
    \sum_{i=1}^N n_i \lambda_i \right] + \VAR\left[ \sum_{i=1}^N n_i
    \lambda_i\right]\\ 
&= a b n + a b^2
    \sum_{i=1}^N n_i^2.  
  \end{split}
  \]
  The variance of the homogeneous estimator is given by $\VAR[\bhom] =
  \VAR\left[ \frac{\sigma}{a n} \right]$, so that equation \eqref{eq:VAR_bhom}
  follows.

By Markov's inequality and Lemma~\ref{prop:VAR_bhom}
\begin{equation*}
  \mathrm{Pr}\left\{ \left| \frac{\sigma}{a n} - b \right| \geq \epsilon
  \right\} \leq \frac{1}{\epsilon} \VAR\left[ \bhom \right],
\end{equation*}
and the variance can be bounded as follows
\[
\begin{split}
  \VAR\left[ \bhom \right] &= \frac{b}{a n} +
  \frac{b^2}{a n^2} \sum_{i=1}^{N} n_i^2\\ 
&\leq \frac{b}{a N} + \frac{b^2}{a N^2}
  \sum_{i=1}^{N} \nmax^2\\
&= \frac{b}{a N} + \frac{ {\nmax}^2 b^2}{a N} \;
  \stackrel{N \rightarrow \infty}{\xrightarrow{\hspace*{1cm}} } \; 0
\end{split}
\]
so that $\bhom \rightarrow b$ in probability, thus concluding the proof.

\subsection{Proof of Lemma~\ref{lem:E_VAR_bhom_t}}
\label{sec:lem_E_VAR_bhom_t}
  To prove that $\bhom_i(t)$ is unbiased at any $t\in\naturalzero$, first let us
  recall that the aggregate states $\eta(t) = [\eta_1(t) \ldots \eta_N(t)]^T$
  and $s(t) = [s_1(t) \ldots s_N(t)]^T$ evolve according to the dynamics
  \eqref{eq:s_eta_dyn} with $s(0) = [\sigma_1 \ldots \sigma_N]^T \eqdef \sigmavec$,
  $\eta(0) = [n_1 \ldots n_N]^T \eqdef \nvec$ and $W(t)$ the column stochastic
  matrix with elements $w_{ij}(t)$.
  The evolution of $s(t)$ and $\eta(t)$ is given by \eqref{eq:s_eta_evol} where
  $\Phi(t)$ is the deterministic state transition matrix defined in
  \eqref{eq:state_trans_matrix}.

  Next, observe that the update $\eta_i(t)$ depends only on the initial value
  $n_i$ and thus is deterministic. Therefore, using the update in
  Algorithm~\ref{alg:ad-hoc_estimator}, we have
\[
\E[\bhom_i(t)] = \frac{1}{a \eta_i(t)} \E[s_i(t)]
\]
Using the evolution of the aggregate state and denoting $e_i$ the $i$th canonical
vector (e.g., $e_1 = [1 \; 0 \; \ldots \; 0]^T$), we have 
\[
\begin{split}
\E[\bhom_i(t)] &= \frac{1}{a \eta_i(t)} \E[e_i^T (\Phi(t)\sigmavec) ]\\
                     &= \frac{1}{a \eta_i(t)} e_i^T \Phi(t) \E[\sigmavec].
\end{split}
\]
Noting that $\E[\sigma_i] = a b n_i$ it follows
\[
\begin{split}
\E[\bhom_i(t)] &= \frac{1}{a \eta_i(t)} e_i^T \Phi(t) a b \, \bm{n}\\
                     &= \frac{b}{\eta_i(t)} e_i^T \eta(t) = b\\
\end{split}
\]
where the last two steps follow respectively from \eqref{eq:s_eta_evol} ($\Phi(t)
\bm{n} = \eta(t)$) and from $e_i^T \eta(t) = \eta_i(t)$.

Next, we show that the transient variance is given by equation
\eqref{eq:VAR_bhom_t}. 

Using again the update in Algorithm~\ref{alg:ad-hoc_estimator}, it holds
\[
\begin{split}
  \VAR[\bhom_i(t)] &= 
\frac{1}{a^2 \eta_i(t)^2} \VAR[e_i^T (\Phi(t)\sigmavec)].\\
\end{split}
\]
Noting that $e_i^T \Phi(t)$ is the $i$th row of $\Phi(t)$, it follows 
\[
\begin{split}
  \VAR[\bhom_i(t)] &= 
\frac{1}{a^2 \eta_i(t)^2} \VAR\left[\sum_{k=1}^N \phi_{ik}(t) \sigma_k\right],
\end{split}
\]
where $\phi_{ik}(t)$ is the element $(i,k)$ of the matrix $\Phi(t)$. By the
law of total variance we have that
\[
\begin{split}
  \VAR[\bhom_i(t)] &= 
\frac{1}{a^2 \eta_i(t)^2} \left\{ \E\left[\VAR\left[\sum_{k=1}^N \phi_{ik}(t)
      \sigma_k \;\Big |\; \lambdavec\right]\right] \right.\\
 &\qquad\left. +\VAR\left[ \E\left[\sum_{k=1}^N \phi_{ik}(t) \sigma_k  \;\Big |\;
      \lambdavec\right] \right] \right\}\\
  &= \frac{1}{a^2 \eta_i(t)^2} \left\{  \E\left[\sum_{k=1}^N \phi_{ik}(t)^2 n_k
      \lambda_k \right]\right.\\ 
    &\left.\qquad +  \VAR\left[\sum_{k=1}^N \phi_{ik}(t) n_k
      \lambda_k \right] \right\},
\end{split}
\]
where we have used $\E[\sigma_k] = n_k \lambda_k$ and $\VAR[\sigma_k] = n_k
\lambda_k$. Finally, recalling that $\E[\lambda_k] = a b$, $\VAR[\lambda_k] = a
b^2$ and the $\lambda_k$s are independent, it turns out 
\[
\begin{split}
  \VAR[\bhom_i(t)] &= \frac{b}{a \eta_i(t)^2} \left( \sum_{k=1}^N \phi_{ik}(t)^2
    n_k + b \sum_{k=1}^N \phi_{ik}(t)^2 n_k^2 \right),
\end{split}
\]
so that equation~\eqref{eq:VAR_bhom_t} follows.

To prove the asymptotic result, we work out $| \VAR[\bhom_i(t)] - \VAR[\bhom]|$
by using \eqref{eq:VAR_bhom_t} and \eqref{eq:VAR_bhom}
\[
\begin{split}
| &\VAR[\bhom_i(t)] -
\VAR[\bhom]| 
=\\ %
&=\left| \frac{b}{a} \left(\frac{\sum_{k=1}^N \phi_{ik}(t)^2
  n_k}{{\left(\sum_{k=1}^N \phi_{ik}(t) n_k\right)^2}} - \frac{1}{n}\right)\right.\\ 
&\qquad\left.+ \frac{b^2}{a} \left(\frac{
  \sum_{k=1}^N \phi_{ik}(t)^2 n_k^2}{\left(\sum_{k=1}^N \phi_{ik}(t)
    n_k\right)^2} -  \frac{\sum_{k=1}^{N} n_k^2}{n^2} \right) \right|\\
&= %
  \frac{1}{\eta_i(t)^2} \left| \frac{b}{a n} \left( n \sum_{k=1}^N \phi_{ik}(t)^2
  n_k -  \left(\sum_{k=1}^N \phi_{ik}(t) n_k\right)^2\right) \right.\\ 
&\qquad \left. + \frac{b^2}{a n^2} \left(
  n^2 \sum_{k=1}^N \phi_{ik}(t)^2 n_k^2 - \left(\sum_{k=1}^N \phi_{ik}(t)
    n_k\right)^2 \sum_{k=1}^{N} n_k^2 \right)\!\right|\\
\end{split}
\]
By using the definition of $n=\sum_{h=1}^N n_h$ and writing $\left(\sum_{k=1}^N
  \phi_{ik}(t) n_k\right)^2 = \left(\sum_{k=1}^N \phi_{ik}(t) n_k\right)\left( \sum_{h=1}^N \phi_{ih}(t) n_h\right)$
\[
\begin{split} 
|& \VAR[\bhom_i(t)] -
\VAR[\bhom]| 
=\\ %
 &= \frac{1}{\eta_i(t)^2} \left| \frac{b}{a n}
    \left( \sum_{h=1}^N n_h \sum_{k=1}^N \phi_{ik}(t)^2
  n_k \right.\right.\\ 
&\qquad\left.-  \sum_{k=1}^N \phi_{ik}(t) n_k \sum_{h=1}^N \phi_{ih}(t) n_h\right) \\ 
&\qquad + \frac{b^2}{a n^2} \left(
 \sum_{h=1}^N n_h \sum_{\ell=1}^N n_\ell \sum_{k=1}^N \phi_{ik}(t)^2 n_k^2 \right.\\ 
&\qquad\left. \left.- \sum_{h=1}^N \phi_{ih}(t)
    n_h \sum_{\ell=1}^N \phi_{i\ell}(t)
    n_\ell \sum_{k=1}^{N} n_k^2 \right)\right|\\
&= %
  \frac{1}{\eta_i(t)^2} \Bigg| \frac{b}{a n}
    \sum_{h=1}^N \sum_{k=1}^N  n_h n_k \phi_{ik}(t)
  \left( \phi_{ik}(t)
  -  \phi_{ih}(t) \right) \\ 
&\qquad + \frac{b^2}{a n^2} 
 \sum_{h=1}^N \sum_{\ell=1}^N \sum_{k=1}^N n_h n_\ell n_k^2 \left(\phi_{ik}(t)^2 - \phi_{ih}(t)
     \phi_{i\ell}(t) \right.\\ 
&\qquad + \left.\phi_{ik}(t) \phi_{i\ell}(t) - \phi_{ik}(t) \phi_{i\ell}(t) \right)\Bigg|\\
&\leq %
  \frac{1}{\eta_i(t)^2}\Bigg[ \frac{b}{a n}
    \sum_{h=1}^N \sum_{k=1}^N  n_h n_k 
 \phi_{ik}(t) \left| \phi_{ik}(t)
  -  \phi_{ih}(t) \right| \\ 
&\qquad + \frac{b^2}{a n^2} 
 \sum_{h=1}^N \sum_{\ell=1}^N \sum_{k=1}^N n_h n_\ell n_k^2
   \phi_{ik}(t) \Big( \left|\phi_{ik}(t) - \phi_{ih}(t)\right|\\ 
& \qquad + \left|\phi_{ik} - \phi_{i\ell}(t) \right| \Big) \Bigg]
\end{split}
\]
where the last inequality follows by using the triangular inequality.
Now, weak ergodicity of $W(t)$ implies that for any $i,k,h\in\until{N}$ it holds
$\left|\phi_{ik}(t) - \phi_{ih}(t)\right| \leq M \lambda^t$ for some $M>0$ and
$\lambda>0$, see, e.g., \cite[Corollary 8]{nedic2013distributed}, so that the
exponential convergence of $\VAR[\bhom_i(t)]$ follows. Then, from the definition
of the coefficient of ergodicity, it follows
\begin{align*}
| &\VAR[\bhom_i(t)] -
\VAR[\bhom]| 
\nonumber \\%
&\leq\frac{1}{\sum_{k=1}^N \phi_{ik}(t) n_k }\left( \frac{b}{a n}
   n  \delta(\Phi(t)) + \frac{b^2}{a n^2} 
  n^2 \nmax 2 \delta(\Phi(t)) \right)
\end{align*}
where we have simplified the common factor $\eta_i(t) =\sum_{k=1}^N \phi_{ik}(t)
n_k$.
Finally, by Lemma~\ref{lem:weak_ergodicity}, $\sum_{k=1}^N\phi_{ik}(t)\geq\mu$,
with $\mu>0$ so that 
\begin{align*}
| \VAR[\bhom_i(t)] -
\VAR[\bhom]| 
&\leq %
  \frac{\delta(\Phi(t)) b(1+2b\nmax)}{\mu a}
\end{align*}
thus concluding the proof.

\subsection{Proof of Theorem~\ref{thm:mean_var_AD-HOC_estimator}}

The local update of the \AHestimator\, (Algorithm~\ref{alg:ad-hoc_estimator})
can be written as $\lambdaAH_j(t)= X(\bhom_j(t)) Y(\sigma_j)$, where
$X(\bhom_j(t))\eqdef \frac{\bhom_j(t)}{1+\bhom_j(t) n_j}$ and
$Y(\sigma_j)\eqdef a+\sigma_j$. Using again the independence between
$\bhom_j(t)$ and $\sigma_j$, we can write $\E[\lambdaAH_j(t) |\lambda_j]$ and
$\VAR[\lambdaAH_j(t) |\lambda_j]$
as 
  \begin{equation}
    \E[\lambdaAH_j(t) |\lambda_j] =\E[XY |\lambda_j] =\E[X |\lambda_j] \E[Y |\lambda_j]  
    \label{eq:tot_E_cond_AH}
  \end{equation}
  and
  \begin{align}
    \!\!\!\VAR[\lambdaAH_j(t) &|\lambda_j]=\E^2[X |\lambda_j] \VAR[Y |\lambda_j]\nonumber\\
                                   &+ \VAR[X |\lambda_j] (\E^2[Y |\lambda_j] \!+\!\VAR[Y |\lambda_j]).
    \label{eq:tot_VAR_cond_AH}
  \end{align}
Considering, as in the previous theorem, the Taylor expansion for the
    moments of the function
    $X(\bhom_j(t)) \eqdef \frac{\bhom_j(t)}{1+\bhom_j(t) n_j}$ around the
    mean $\E[\bhom_j(t) |\lambda_j]=b$,
    we obtain
\begin{align}
    \E[X(&\bhom_j(t)) | \lambda_j] = \E[X(\bhom_j(t))]\nonumber\\ 
   &\approx X(\E[\bhom_j(t)])+ \frac{1}{2} X''(\E[\bhom_j(t)]) \VAR[\bhom_j(t)] \nonumber\\
  &= \frac{b}{1+n_j b}
  - \frac{n_j}{(1+n_j b)^3} \VAR[\bhom_j(t)]
  \label{eq:E_X}
\end{align}
where $\VAR[\bhom_j(t)]$ is the one given in \eqref{eq:VAR_bhom_t} and, again,
the $\approx$ symbol indicates that we have neglected higher-order terms in the
expansion of $X$.
Hence, plugging \eqref{eq:E_Y} and \eqref{eq:E_X} in \eqref{eq:tot_E_cond_AH},
equation \eqref{eq:E_cond_approx_AH} follows.
Similarly,
\begin{align}
  \VAR[X(&\bhom_j(t))| \lambda_j] \approx \left(X'(\E[\bhom_j(t)])\right)^2
  \VAR[\bhom_j(t)] \nonumber\\
  &= \left[ \frac{1}{(1+n_j b)^2} \right]^2
 \left( \frac{b}{a n} +
  \frac{b^2}{a n^2} \sum_{i=1}^{N} n_i^2 \right).
 \label{eq:VAR_X}
\end{align}
Using \eqref{eq:E_X}-\eqref{eq:VAR_X} into
\eqref{eq:tot_VAR_cond_AH}, equation \eqref{eq:VAR_cond_approx_AH} follows, thus
concluding the first part of the proof.

To prove the asymptotic results, we just need to recall from
Proposition~\ref{prop:VAR_bhom} that as $N\rightarrow\infty$
$\VAR[\bhom]\rightarrow0$, so that equations \eqref{eq:E_cond_approx_AH_asympt}
and \eqref{eq:VAR_cond_approx_AH_asympt} follow, thus concluding the proof.

\begin{IEEEbiography}
  [{\includegraphics[width=1in,height=1.25in,clip,keepaspectratio]{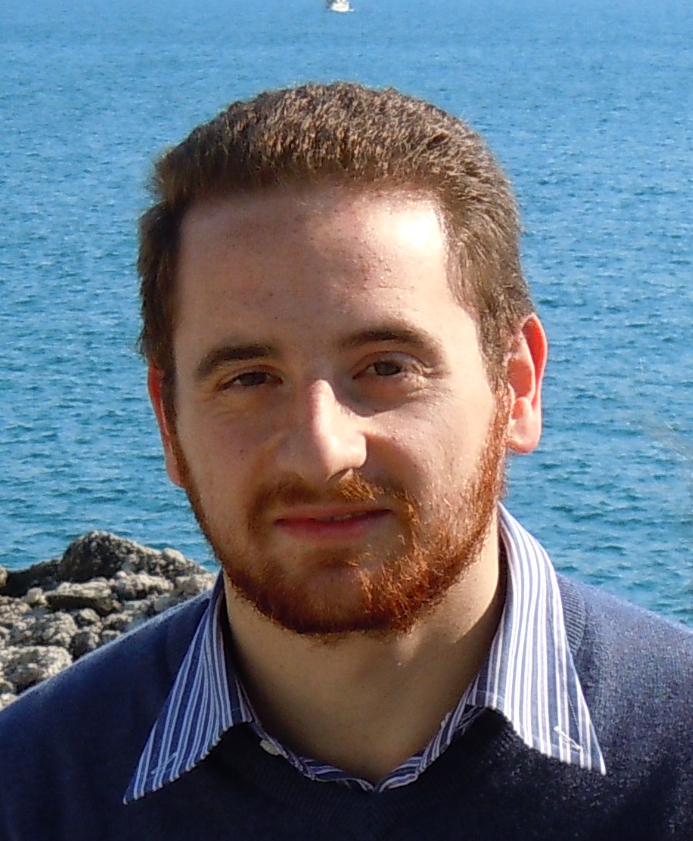}}]
  {Angelo Coluccia} (M'13) received the Eng. degree in Telecommunication
  Engineering (summa cum laude) in 2007 and the PhD degree in Information
  Engineering in 2011, both from the University of Salento, Lecce, Italy. Former
  researcher at Forschungszentrum Telekommunikation Wien, Vienna, since 2008 he
  has been engaged in research projects on traffic analysis, security and
  anomaly detection in operational cellular networks. He is currently Assistant
  Professor at the Dipartimento di Ingegneria dell'Innovazione, University of
  Salento, where he teaches the course of Telecommunication Systems.  His
  research interests are signal processing, communications and wireless
  networks, in particular cooperative sensing/estimation approaches for
  localization and other (possibly distributed) applications.
\end{IEEEbiography}

\begin{IEEEbiography}
[{\includegraphics[width=1in,height=1.25in,clip,keepaspectratio]{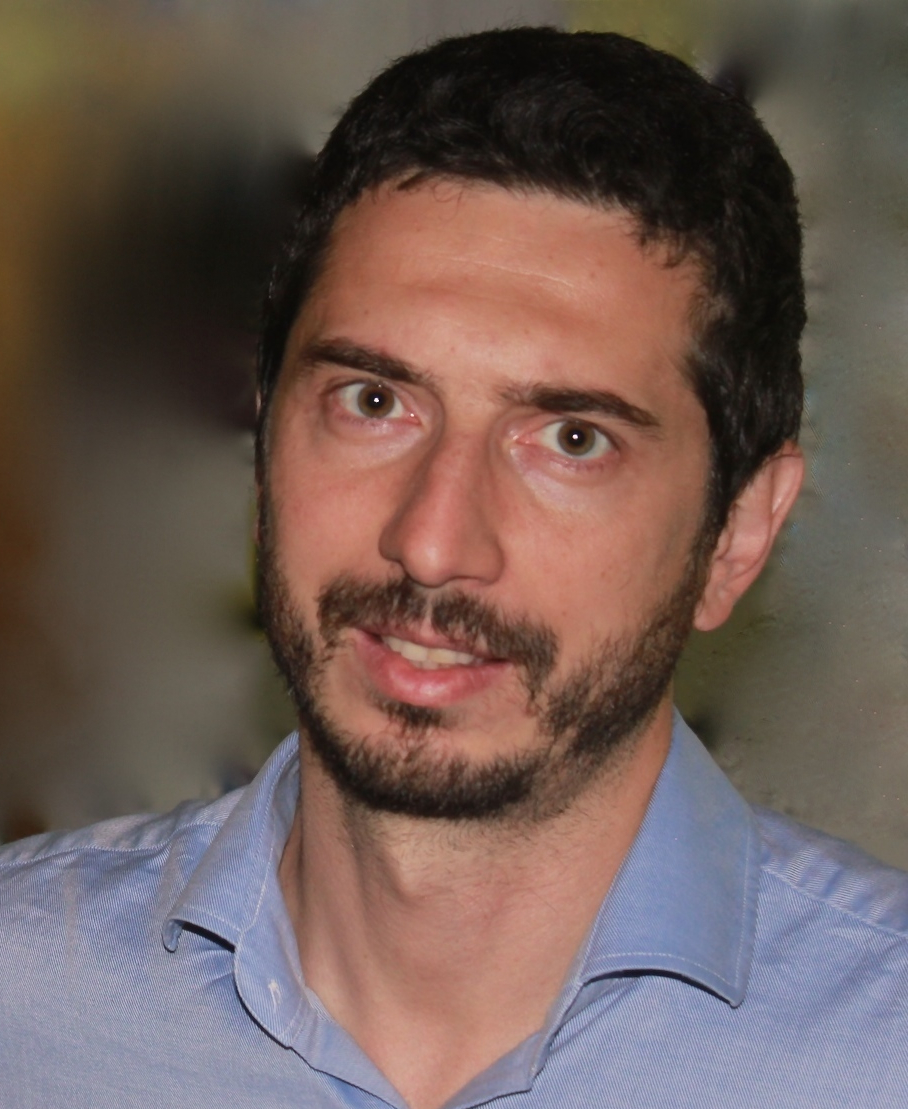}}]
{Giuseppe Notarstefano}
has been an Assistant Professor (Ricercatore) at the Universit\`a del Salento
(Lecce, Italy) since February 2007. He received the Laurea degree ``summa cum
laude'' in Electronics Engineering from the Universit\`a di Pisa in 2003 and the
Ph.D. degree in Automation and Operation Research from the Universit\`a di
Padova in April 2007. He has been visiting scholar at the Universities of
Stuttgart, California Santa Barbara and Colorado Boulder. His research interests
include distributed optimization, cooperative control in multi-agent networks,
applied nonlinear optimal control, and trajectory optimization and maneuvering
of aerial and car vehicles. He serves as an Associate Editor in the Conference
Editorial Board of the IEEE Control Systems Society and for the European Control
Conference, IFAC World Congress and IEEE Multi-Conference on Systems and
Control. He coordinated the VI-RTUS team winning the International Student
Competition Virtual Formula 2012. He is recipient of an ERC Starting Grant 2014.
\end{IEEEbiography}

\end{document}